\newtheorem{proposicion}{Proposition}[section]
\newtheorem{lema}[proposicion]{Lemma}
\newtheorem{teorema}[proposicion]{Theorem}
\newtheorem{corolario}[proposicion]{Corollary}
\theoremstyle{definition}
\newtheorem{observacion}[proposicion]{Remark}
\newtheorem{definicion}[proposicion]{Definition}
\newtheorem{defn}[proposicion]{Definition}
\newtheorem{ejemplo}[proposicion]{Example}
\newtheorem{example}[proposicion]{Example}
\newtheorem{noter}[proposicion]{Note}
\newcommand{\bobs}{\begin{observacion}}
\newcommand{\eobs}{\end{observacion}}
\newcommand{\beq}{\begin{equation}}
\newcommand{\eeq}{\end{equation}}
\newcommand{\bprop}{\begin{proposicion}}
\newcommand{\eprop}{\end{proposicion}}
\newcommand{\blema}{\begin{lema}}
\newcommand{\elema}{\end{lema}}
\newcommand{\bejem}{\begin{ejemplo}}
\newcommand{\eejem}{\end{ejemplo}}
\newcommand{\bteo}{\begin{teorema}}
\newcommand{\eteo}{\end{teorema}}
\newcommand{\bdefin}{\begin{definicion}}
\newcommand{\edefin}{\end{definicion}}
\newcommand{\benum}{\begin{enumerate}}
\newcommand{\eenum}{\end{enumerate}}
\newcommand{\bcor}{\begin{corolario}}
\newcommand{\ecor}{\end{corolario}}
\newcommand{\bmat}{\begin{matrix}}
\newcommand{\emat}{\end{matrix}}
\newcommand{\barr}{\begin{array}}
\newcommand{\earr}{\end{array}}
\newcommand{\bcas}{\begin{cases}}
\newcommand{\ecas}{\end{cases}}
\newcommand{\bcen}{\begin{center}}
\newcommand{\ecen}{\end{center}}
\newcommand{\bdem}{\begin{proof}}
\newcommand{\edem}{\end{proof}}
\newcommand{\To}{\longrightarrow}
\newcommand{\ma}{\mathcal}
\newcommand{\op}{\operatorname}
\newcommand{\RR}{{\mathcal R}}
\newcommand{\ben}{\begin{equation}}
\newcommand{\een}{\end{equation}}
\newcommand{\bena}{\begin{equation*}}
\newcommand{\eena}{\end{equation*}}
\newcommand{\integer}{\ensuremath{{\mathbb Z}}}
\newcommand{\ZZ}{\ensuremath{{\mathbb Z}}}
\def\RR{\mathbb{R}}
\def\ZZ{\mathbb{Z}}
  \title{$\mathbb{Z}_k$-Stratifolds}
\author{Andr\'es Angel\thanks{ja.angel908@uniandes.edu.co}, Carlos Segovia \thanks{csegovia@matem.unam.mx}
and
 Arley Fernando Torres \thanks{arley.torres@uexternado.edu.co / torres.af@javeriana.edu.co}}
\begin{document}

\maketitle

\begin{abstract}Generalizing the ideas of $\ZZ_k$-manifolds from Sullivan and stratifolds from Kreck, we define $\mathbb{Z}_k$-stratifolds. 
We show that the bordism theory of $\ZZ_k$-stratifolds is sufficient to represent all homology classes of a $CW$-complex with coefficients in $\ZZ_k$.
We present a geometric interpretation of the Bockstein long exact sequences and the Atiyah--Hirzebruch spectral sequence for $\ZZ_k$-bordism ($k$ an odd number). 
Finally, for $p$ an odd prime, we give geometric representatives of all classes in $H_*(B\mathbb{Z}_p ;\mathbb{Z}_p)$ using $\ZZ_p$-stratifolds.
\end{abstract}

\section{Introduction}
Various geometric models of homology classes use the notion of bordism. 
For instance, Baas \cite{baas} constructs a generalized homology theory using the bordism of manifolds with singularities.
Buoncristiano--Rourke--Sanderson \cite{AGA} gives a geometric treatment of generalized homology.
Certain singular spaces called $\ZZ_k$-manifolds were introduced initially by Sullivan \cite{Sullivan3,Sullivan1,Sullivan4}, although Morgan--Sullivan \cite{sullivan} gives the first formal study of this subject. 
The theory of $\ZZ_k$-manifolds gives a geometric model for $\ZZ_k$-homology classes, but Sullivan pointed out that $\ZZ_k$-manifolds are not general enough to represent $\ZZ_k$-homology. For example, the generator of $H_8(K(\ZZ, 3);\ZZ_3)$ is not represented by a $\ZZ_3$-manifold, see \cite{Sullivan2}. Moreover, Brumfiel \cite{Brum} shows that the non-zero classes in $H_{2p}(K(\ZZ_p,1);\ZZ_p)$ cannot be represented by $\ZZ_p$-manifolds whenever $p$ is prime. 
In fact, in this work, we show that for every $i\geq 3$ there exists a class $\alpha_{2i}\in H_{2i}(B\ZZ_p;\ZZ_p)$ that cannot be represented by $\ZZ_p$-manifolds.
Thus a geometric model is needed to represent every homology class with $\ZZ_k$-coefficients. For this purpose, 
we focus on the theory of stratifolds developed by Kreck \cite{kreck}, where the homology with $\ZZ$-coefficients and $\mathbb{Z}_2$-coefficients are represented by the bordism theories of stratifold homology $SH_*(X)$ and stratifold homology with $\ZZ_2$-coefficients (only works for $\ZZ_2$-coefficients). 

We consider the generalized homology theory of bordism of $\ZZ_k$-manifolds with continuous maps to $X$, denoted by $\Omega_*(X;\ZZ_k)$. There is a long exact sequence satisfying the commutative diagram 
\begin{equation}\label{diag1}
    \xymatrix{\cdots \ar[r] & \Omega_n(X) \ar[r]^{\times k}\ar[d]^h &  \Omega_n(X) \ar[r]^r \ar[d]^h & \Omega_n(X;\ZZ_k)\ar[r]^\delta\ar[d]^{h_{\ZZ_k}} & \Omega_{n-1}(X)\ar[r]\ar[d]^h &\cdots\\
    \cdots \ar[r] & H_n(X) \ar[r]^{\times k} &H_n(X)\ar[r]^r & H_n(X;\ZZ_k)\ar[r]  & H_{n-1}(X)\ar[r] &\cdots}
\end{equation}
where $\delta: \Omega_*(X;\ZZ_k)\rightarrow\Omega_{n-1}(X)$ is the Bockstein homomorphism, $r:\Omega_n(X)\rightarrow\Omega_n(X;\ZZ_k)$ is the reduction homomorphism obtained by considering a closed manifold as a $\ZZ_k$-manifold with empty Bockstein, and 
$h_{\ZZ_k}: \Omega_*(X;\mathbb{Z}_k) \rightarrow H_*(X;\mathbb{Z}_k)$ is the Hurewicz homomorphism provided by the existence of fundamental $\ZZ_k$-homology classes. 

Generalizing the ideas of Sullivan and Kreck, we define the bordism theory of $\ZZ_k$-stratifolds, and we can consider the generalized homology theory of bordism of $\ZZ_k$-stratifolds with continuous maps to $X$, denoted by $SH_*(X;\ZZ_k)$.
We call this theory {\bf $\ZZ_k$-stratifold homology}.
Again, we have a long exact sequence satisfying the commutative diagram 
\begin{equation}\label{diag2}
    \xymatrix{\cdots \ar[r] & SH_n(X) \ar[r]^{\times k}\ar[d]^h &  SH_n(X) \ar[r]^r \ar[d]^h & SH_n(X;\ZZ_k)\ar[r]^\delta\ar[d]^{h_{\ZZ_k}} & SH_{n-1}(X)\ar[r]\ar[d]^h &\cdots\\
    \cdots \ar[r] & H_n(X) \ar[r]^{\times k} &H_n(X)\ar[r]^r & H_n(X;\ZZ_k)\ar[r]  & H_{n-1}(X)\ar[r] &\cdots}
\end{equation}
In this case, the Hurewicz homomorphism $h_{\ZZ_k}: SH_*(X;\mathbb{Z}_k) \rightarrow H_*(X;\mathbb{Z}_k)$ is constructed in the same vein as in the theory of $\ZZ_k$-manifolds. 
We show that $\ZZ_k$-stratifold homology satisfies the Eilenberg--Steenrod axioms on $CW$-complexes, in particular we show that the Mayer--Vietoris sequence axiom holds by using a regularity argument for $\ZZ_k$-stratifolds \cite{kreck}.
The main result of this paper is the following:

\bteo
An isomorphism exists between $\ZZ_k$-stratifold homology theory and singular homology with $\ZZ_k$-coefficients. This isomorphism is valid for all CW-complexes and is compatible with the Bockstein homomorphisms. 
\eteo

F\"uhring \cite{fuehring} develops a smooth version of the Baas--Sullivan theory of manifolds with singularities that is applied to the Positive Scalar Curvature problem. 
In a way, stratifolds and $\mathbb{Z}_k$-stratifolds are another kind of  smooth versions of the Baas--Sullivan theory of manifolds with singularities.
One of the advantages of stratifolds and $\ZZ_k$-stratifolds is 
a very concrete description
of the filtration of the Atiyah--Hirzebruch Spectral Sequence (AHSS) for oriented bordism and $\ZZ_k$-bordism. This geometric description of the AHSS for $\ZZ$-coefficients was given by Tene \cite{tene}, and for $\ZZ_k$-coefficients has the following form:

\bteo
For $k$ an odd number, the filtration for the AHSS of $\ZZ_k$-bordism
\begin{equation}  E_{n,0}^{\infty} \subseteq \dots \subseteq E^{r+2}_{n,0} \subseteq \dots   \subseteq E_{n,0}^{2} \cong H_n(X ; \mathbb{Z}_k)=SH_n(X;\integer_k)\,, \end{equation}
coincides with the set of classes generated by singular $\ZZ_k$-stratifolds in $X$, where the singular part is of dimension at most $n-r-2$.
\eteo

A fascinating application is the existence of a homology class in $H_{2i}(B\ZZ_p;\ZZ_p)$, $i\geq 3$ and $p$ an odd prime number, that cannot be represented by a $\ZZ_p$-manifold.
This is similar to the counterexample of Thom  for the Steenrod problem \cite[Chapter III]{Novfra}, which we explain geometrically in the paper by the authors \cite{AST}.

We organize the article as follows: 
Section \ref{sec2} outlines some basic facts about $\ZZ_k$-manifolds studied by Morgan--Sullivan \cite{sullivan}.  
In Section \ref{sec3}, we briefly introduce the language of stratifolds from Kreck \cite{kreck,kreck1}. 
Section \ref{sec4} introduces the main theorems of this work, where we combine the theory of $\ZZ_k$-manifolds from Sullivan and the theory of stratifolds from Kreck. Then we define  $\ZZ_k$-stratifolds and develop the basic theory of these objects. We show that the usual properties of stratifolds still remain valid. We show that $\ZZ_k$-stratifold homology satisfies the Eilenberg--Steenrod axioms on $CW$-complexes. 
Section \ref{sec8} develops the existence of the fundamental class,
and we postpone the proof of the existence of the  Mayer--Vietoris sequence until Appendix \ref{apendice}. 
In Section \ref{secc1}, we apply the results of Tene \cite{tene} to give a geometric description of the Atiyah--Hirzebruch spectral sequence for $\ZZ_k$-bordism ($k$ an odd number).
In Section \ref{secGR}, we use this description to find homology classes with $\ZZ_k$-coefficients that cannot be represented by $\ZZ_k$-manifolds.
Finally, in Section \ref{secfin}, the two possible ways to represent homology with  $\mathbb{Z}_2$-coefficients using stratifolds are related, providing an explicit isomorphism between the two theories.

\subsection{Acknowledgements}
We thank the Math Institute UNAM-Oaxaca and Universidad de los Andes for the hospitality and financial support that made this collaboration possible. The first author acknowledges and thanks the hospitality and financial support provided by the Max Planck Institute for Mathematics in Bonn. This work was partially supported by the grant (\#INV-2019-84-1860) from the Fondo de Investigaciones de la Facultad de Ciencias de la Universidad de los Andes.
The second author is supported by c\'atedras CONACYT and Proyecto CONACYT Ciencias b\'asicas 2016, No. 284621. 
The third author's Ph.D. thesis \cite{torres} contains part of these results under the supervision of the first author. The maturity of the present paper is due to the guidance of the second author in two visits of the third author to the Math Institute UNAM-Oaxaca. Without the invaluable contribution of the second author, this work would not have been possible.
The third author would like to thank the Universidad Pontificia Javeriana for the help provided after his Ph.D. 
graduation and especially the Universidad Externado de Colombia, where he has been a professor in the mathematics department since 2020.  
Finally, we thank the anonymous Reviewer for the careful reading of our manuscript. We sincerely appreciate all the valuable comments and suggestions, which helped us improve the quality of the manuscript.

\section{$\ZZ_k$-Manifolds}
\label{sec2}
Suppose that $k\geq 2$ is a positive integer.
In what follows, we outline some basic facts about $\ZZ_k$-manifolds introduced by Morgan--Sullivan \cite{sullivan}.

\begin{noter}
For this note, unless otherwise indicated, let us set the convention that the manifolds are oriented and compact.
Also, all the diffeomorphisms and embeddings are orientation-preserving.
\end{noter}

\label{Zmani}
\bdefin
A closed $n$-dimensional $\ZZ_k$-{\bf manifold} is given by the triple $\mathcal{M}=(M,\delta M,\theta_i)$ where:  
\begin{itemize}
\item[(1)] $M$ is a compact $n$-manifold, with boundary $\partial M$,
\item[(2)] $\delta M$ is a compact $(n-1)$-manifold without boundary, called the Bockstein, and
\item[(3)] $\theta_i:\delta M\hookrightarrow\partial M$, with $i\in\ZZ_k$, are $k$ disjoint embeddings such that we have a diffeomorphism 
$\partial M=\bigsqcup_{i\in\ZZ_k}\theta_i(\delta M)$.
\end{itemize}
\edefin

\begin{definicion}
There is an associated {\bf quotient} space $\tilde{M}$ given by the identification on $M$ of the $k$ copies of $\delta M$ together using the embeddings $\theta_i$'s.
\end{definicion}

\begin{example} A closed oriented manifold is a  $\ZZ_0$-manifold (or equivalently a $\ZZ$-manifold) where the Bockstein $\delta M$ is empty.
\end{example}

\begin{example}
The typical example of a $\ZZ_2$-manifold is the cylinder $M=S^1\times [0,1]$, $\delta M=S^1$ and embeddings $\theta_1,\theta_2:S^1\hookrightarrow S^1\times \{0\}\sqcup S^1\times \{1\}$, with $\theta_1(S^1)=S^1\times \{0\}$ and $\theta_2(S^1)=S^1\times\{1\}$ ($S^1\times\{1\}$ with the reverse orientation). 
The quotient space $K:=\tilde{M}$ is the well-known Klein bottle; see Figure \ref{klein}.
\begin{figure}
    \centering
    \includegraphics[scale=0.2]{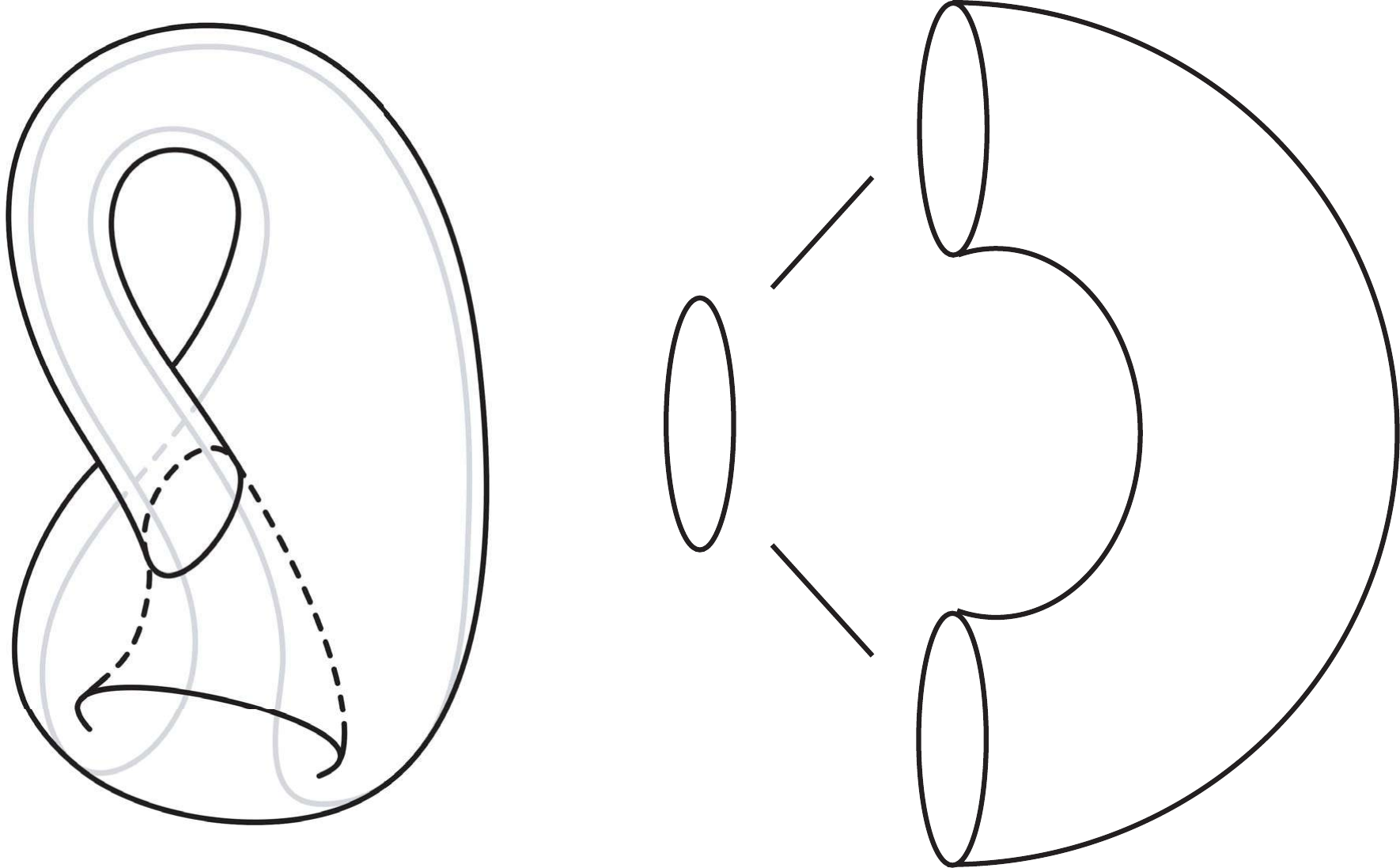}
    \caption{Representation of the Klein bottle as the quotient space of a $\ZZ_2$-manifold.}
    \label{klein}
\end{figure}
\end{example}

Here we observe that even though the second integral homology group is zero for the Klein bottle, we can obtain a fundamental class after we change to $\ZZ_2$ coefficients, i.e., $H_2(K;\ZZ_2)\cong\ZZ_2$.
In Section \ref{sec8}, we show this fundamental class always exists for a $\ZZ_k$-stratifold.

\bejem\label{Zkmani}
Consider the pair of pants $P$ with boundary $\partial P = S^1 \sqcup S^1 \sqcup S^1$ and Bockstein $\delta P=S^1$; see Figure \ref{z3mani}.
\begin{figure}
    \centering
    \includegraphics[scale=1]{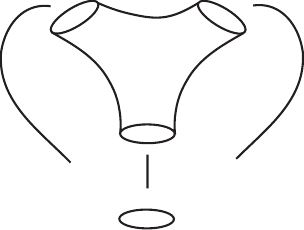}
    \caption{ A closed $\ZZ_3$-manifold.}
    \label{z3mani}
\end{figure}

\eejem

In what follows, we define the notion of a $\ZZ_k$-manifold with boundary.

\bdefin\label{defZkman}
An $(n+1)$-dimensional $\ZZ_k$-{\bf manifold with boundary} is given by the triple $\mathcal{B}=(B,\delta B,\psi_i)$ where:
\begin{itemize}
\item[(1)] $B$ is a compact $(n+1)$-dimensional manifold, with boundary $\partial B$,
\item[(2)] $\delta B$ is a compact $n$-dimensional manifold, called the Bockstein, with boundary $\partial\delta B$, and
\item[(3)] $\psi_i:\delta B\hookrightarrow \partial B $, with $i\in \ZZ_k$, are $k$ disjoint embeddings such that the triple 
$$ \left(\partial B-int( \bigsqcup_{i\in\ZZ_k} \psi_i(\delta B)),\partial \delta B,{\psi_i}|_{\partial \delta B}\right)$$ defines a closed $n$-dimensional $\ZZ_k$-manifold $(M,\delta M,\theta_i)$. 
\end{itemize}
This closed $n$-dimensional $\ZZ_k$-manifold is called the {\bf $\ZZ_k$-boundary} of the $\ZZ_k$-manifold with boundary $\mathcal{B}$ and is denoted by $\partial \mathcal{B}=(M,\delta M,\theta_i)$.
\edefin

\begin{definicion}
As before, there is the {\bf quotient} space $\tilde{B}$ which results by the identification on $B$ of the $k$ embedded copies of $\delta B$ together using the embeddings $\psi_i$'s.
\end{definicion}

\bejem
Consider the $3$-dimensional $\ZZ_3$-manifold with boundary $\mathcal{B}=(B,\delta B,\psi_i)$ where $B=D^3$ is the three dimensional closed ball (hence $\partial B=S^2$), $\delta B=D^2$ is the two-dimensional closed disc
and $\psi_i:D^2\longrightarrow S^2$, with $i\in \ZZ_3$, are given by three disjoint embedded discs inside the sphere. 
The $\ZZ_3$-boundary $\partial \mathcal{B}=(M,\delta M,\theta_i)$ is
the $2$-dimensional $\ZZ_3$-manifold from Example \ref{Zkmani}, where $M$ is the pair of pants and $\delta M$ is the circle. See Figure \ref{zkboundarym} for an illustration. 
\eejem

\begin{figure}
    \centering
    \includegraphics[scale=1]{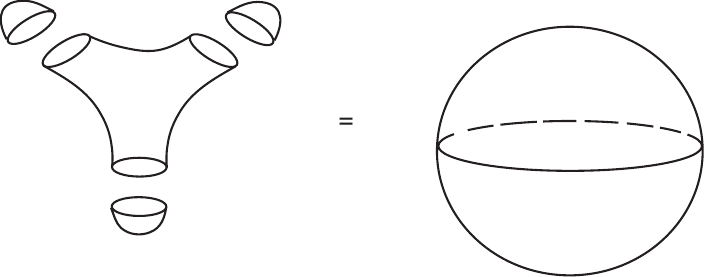}
\caption{A $\ZZ_3$-manifold with boundary.}
    \label{zkboundarym}
\end{figure}

\begin{example}Consider the $2$-dimensional $\ZZ_3$-manifold with boundary $\mathcal{B}=(B,\delta B,\psi_i)$ where $B$ is a connected surface of genus one with only one boundary circle, the Bockstein $\delta B$ is the interval $[0,1]$ and 
$\psi_i:[0,1]\rightarrow \partial B=S^1$, with $i\in \ZZ_3$, are given by three disjoint embedded intervals inside the circle. 
The $\ZZ_3$-boundary of the $\ZZ_3$-manifold $\mathcal{B}$ is a $1$-dimensional $\ZZ_3$-manifold $\partial \mathcal{B}=(M,\delta M,\theta_i)$ with $M$ the disjoint union of three copies of the interval, $\delta M$ is the disjoint union of two points and the embeddings $\theta_i$ are given by the restrictions $\psi_i|_{\delta M}$.
In Figure \ref{fig-1}, we illustrate the $\ZZ_3$-stratifold $(B,\delta B,\psi_i)$ where on the right side we depict the boundary $\partial B$ after the quotient.

\begin{figure}
\centering
\includegraphics[scale=1]{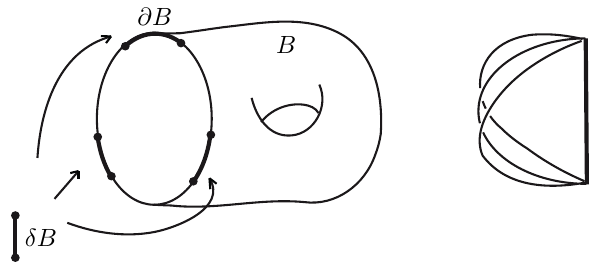}
\caption{Left: a $\ZZ_3$-manifold with boundary / Right: the boundary $\partial B$ after quotient.}
\label{fig-1}
\end{figure}
\end{example}

\begin{defn}
Let $X$ be a topological space and $n$ a natural number. 
An $n$-dimensional {\bf singular $\ZZ_k$-manifold} in $X$ is a closed $n$-dimensional $\ZZ_k$-manifold $\mathcal{M}=(M,\delta M,\theta_i)$ together with a continuous map $f:M\longrightarrow X$ such that
$f\circ\theta_i=f\circ\theta_j$ for $i,j\in \ZZ_k$.
A {\bf singular $\ZZ_k$-bordism} between two $n$-dimensional singular $\ZZ_k$-manifolds $(\mathcal{M},f)$ and $(\mathcal{M}',f')$ is a  $\ZZ_k$-manifold with boundary $\mathcal{B}=(B,\delta B,\psi_i)$, with $\ZZ_k$-boundary 
$\partial \mathcal{B}=(M+M',\delta M+\delta M',f+f')$ together with a continuous map $F:B\longrightarrow X$,
with $F\circ \psi_i=F\circ \psi_j$ for $i,j\in \ZZ_k$, extending $f$ and $f'$.
Recall that the $\ZZ_k$-manifolds are oriented. In this definition, the sum of $\ZZ_k$-manifolds is given by 
$$(M+M',\delta M+\delta M',f+f')=(M\sqcup -M',\delta M\sqcup -\delta M',f\sqcup f')\,.$$
The {\bf $\ZZ_k$-bordism group} group $\Omega_n(X;\ZZ_k)$ is given by the equivalence classes of $n$-dimensional singular $\ZZ_k$-manifolds $(\mathcal{M},f)$ under this $\ZZ_k$-bordism relation. The elements of this group are denoted by $[\mathcal{M},f]$.
\end{defn}

The $\ZZ_k$-bordism groups $\Omega_n(X;\ZZ_k)$ are a generalized homology theory (this follows by Section \ref{sec4} or see \cite[Chapter III]{AGA}). 
The existence of the fundamental class $[\mathcal{M}]_{\ZZ_k}\in H_n(\tilde{M};\ZZ_k)$, see Section \ref{sec8}, induces the Hurewicz homomorphism 
$h_{\ZZ_k}:\Omega_n(X;\ZZ_k)\rightarrow H_n(X;\ZZ_k)$. In addition, we have the reduction map $r:\Omega_n(X)\rightarrow \Omega_n(X;\ZZ_k)$. This map considers an $n$-dimensional closed manifold as a $\ZZ_k$-manifold with $\delta M=\emptyset$. 
Moreover, we have the Bockstein sequence, which fits into the following commutative diagram  
\begin{equation}\label{yiyik}
    \xymatrix{\cdots \ar[r] & \Omega_n(X) \ar[r]^{\times k}\ar[d]^h &  \Omega_n(X) \ar[r]^r \ar[d]^h & \Omega_n(X;\ZZ_k)\ar[r]^\delta\ar[d]^{h_{\ZZ_k}} & \Omega_{n-1}(X)\ar[r]\ar[d]^h &\cdots\\
    \cdots \ar[r] & H_n(X) \ar[r]^{\times k} &H_n(X)\ar[r]^r & H_n(X;\ZZ_k)\ar[r]  & H_{n-1}(X)\ar[r] &\cdots}
\end{equation}
for $n\geq 1$.

\section{Stratifolds}
\label{sec3}
We briefly introduce the language of stratifolds from Kreck \cite{kreck,kreck1}. For this purpose, we need the notion of differential space \cite{sirk,kreck,kreck1}.

\begin{definicion}
A \textbf{differential space} is a pair $(X,\mathcal{C})$ where $X$ is a topological Hausdorff space with a countable basis and $\mathcal{C} \subset C^0(X)$ is a sheaf of real-valued continuous functions, such that for $f_1,\cdots, f_k$ in $\mathcal{C}$ and $f$ a smooth function on $\RR^k$, the composition $f(f_1,\cdots,f_k)$ is in $\mathcal{C}$.
\end{definicion}

For a differential space, each point $x\in X$ has associated a tangent space, denoted by $T_x X$, which is the space of all derivations of the germ $\Gamma_x(\mathcal{C})$ of smooth functions at $x$. 
A smooth mani\-fold is a natural example of a differential space, which is locally diffeomorphic to $\RR^n$ equipped with the sheaf of all smooth functions.

\begin{defn}\label{stratifold}\cite[Def.~1]{kreck1}
An $n$-dimensional {\bf stratifold} is a differential space $(S,\mathcal{C})$ where the sheaf $\mathcal{C}$ induces a suitable stratification $S^k:=\{x\in S:\operatorname{dim} T_x S=k \}$. The union of all strata of dimension $\leq k$ is called the $k$-skeleton $S_k$. In addition, we assume:
\begin{itemize}
    \item[(i)] For each $k$ the stratum $S^k$, together with the restriction sheaf $\mathcal{C}|_{S^k}$, is a smooth $k$-dimensional manifold as differential space.
    \item[(ii)] All skeleta are closed subsets of $S$.
    \item[(iii)] All strata of dimension $> n$ are empty.
    \item[(iv)] For each $x\in S$ and open neighborhood $U$, with $x\in U$, there is a so-called bump function $\rho:S\longrightarrow\RR_{\geq 0}$ in $\ma{C}$, such that $\op{supp}\rho\subset U$ and $\rho(x)>0$.
    \item[(v)] \label{ev}For each $x\in S^k$ the restriction gives an isomorphism $\Gamma_x(\mathcal{C})\longrightarrow\Gamma_x(\mathcal{C}|_{S^k})$.
\end{itemize}
\end{defn}
\begin{defn}
A continuous map $f:(S,\mathcal{C})\longrightarrow (S',\mathcal{C}')$ is {\bf smooth}, if the precomposition by $f$ 
sends every element of $\mathcal{C}'$ to an element of $\mathcal{C}$. If $f$ and the inverse $f^{-1}$ are smooth, then $f$ is called a {\bf diffeomorphism of stratifolds}. 
Similarly, we can define the notion of a (smooth) {\bf embedding of stratifolds} by requiring that the restriction to the image is a diffeomorphism of stratifolds.
\end{defn}
\begin{example}(\cite[Ex.~1, p.~19]{kreck}) The open cone of an $n$-dimensional manifold $\stackrel{\circ}{CM} := M\times [0,1)/_{M\times\{0\}}$ is an example of an $(n+1)$-dimensional stratifold, where $\mathcal{C}$ consists of all continuous functions on $\stackrel{\circ}{CM}$ which are constant on some open neighborhood of the point produced by collapsing $M\times \{0\}$ and whose restriction to $M\times (0,1)$ is smooth. 
\end{example}

\begin{defn}
Let $W$ be a smooth manifold. A {\bf collar} is a homeomorphism $c:\partial W\times [0,\epsilon)\rightarrow U$, with $\epsilon>0$, where $U$ is an open neighborhood of $\partial W$ in $W$ such that 
$c|_{\partial W\times \{0\}}=\op{id}_{\partial W}$ and $c|_{\partial W\times (0,\epsilon)}$ is a diffeomorphism onto $U-\partial W$.
\end{defn}

\begin{definicion}
Let $(T,\partial T)$ be a pair of topological spaces. Assume $\stackrel{\circ}{T}=T-\partial T$ and $\partial T$ are stratifolds of dimension $n$ and $n-1$ with $\partial T\subset T$ a closed subspace. 
A {\bf collar} of $\partial T$ into $T$ is a homeomorphism $c:\partial T\times [0,\epsilon)\rightarrow U$, with $\epsilon>0$, where $U$ is an open neighborhood of $\partial T$ in $T$ such that 
$c|_{\partial T\times \{0\}}=\op{id}_{\partial T}$ and $c|_{\partial T\times (0,\epsilon)}$ is a diffeomorphism of stratifolds onto $U-\partial T$.
\end{definicion}

\begin{defn}
An $(n+1)$-dimensional {\bf stratifold with boundary} is a pair of topological spaces 
$(T,\partial T)$, together with a collar $c$ of $\partial T$ into $T$, where $T-\partial T$ is an $(n+1)$-dimensional stratifold and $\partial T$ is an $n$-dimensional stratifold, which is a closed subspace of $T$. We call $\partial T$ the {\bf boundary} of $T$.
\end{defn}

The following example is crucial in the theory of stratifolds.

\begin{example}\label{conoe}(\cite[p.~36]{kreck})
The closed cone $C(S)$ of a stratifold $S$ has underlying topological space $T=S\times [0,1]/_{S\times \{0\}}$ whose interior is $S\times [0,1)/_{S\times\{0\}}$ and whose boundary is $S\times \{1\}$. The collar is given by the map $S\times [0,1/2)\rightarrow C(S)$ mapping $(x,t)$ to $(x,1-t)$. 
\end{example}

Now, we define some important classes of stratifolds \cite{kreck}.

\begin{definicion}(\cite[p.~79]{kreck})
 An $n$-dimensional stratifold $S$ is \textbf{oriented} if the top stratum $S^n$ is an oriented manifold and the stratum $S^{n-1}$ is empty. 
 \end{definicion}

  \begin{definicion}(\cite[p.~43]{kreck})
  An $n$-dimensional stratifold $S$ is \textbf{regular} if for each $x \in S^i$, $0\leq i\leq n$, there is an open neighborhood $U$ of $x$ in $S$, a stratifold $F$ with $F^0$ a single point, an open subset $V$ of $S^i$, and a diffeomorphism of stratifolds $\phi: V\times F \to U$, whose restriction to $V\times F^0$ is the identity.
  \end{definicion}

\bobs(\cite[p.~24]{kreck})\label{pstratifold}
In this note, we restrict to a special class of stratifolds called {\bf p-stratifolds}. 
The construction of a p-stratifold is as follows: we start with a $0$-dimensional p-stratifold, which is a $0$-dimensional manifold. Assume we construct by induction a $(k-1)$-dimensional p-stratifold $(S,\mathcal{C})$ and let $W$ be a $k$-dimensional manifold with a smooth and proper map $f:\partial W\longrightarrow S$. Then, we define the $k$-dimensional p-stratifold 
$(W\sqcup_f S,\mathcal{C}')$ where $\mathcal{C}'$ is constructed using a collar $c:\partial W\times [0,\epsilon)\rightarrow U$. More precisely, the function $g$ belongs to $\mathcal{C}'$ if and only if $g|_{S}$, $g|_{W-\partial W}$ are smooth and for some $\delta<\epsilon$ we have $gc(x,t) = gf(x)$ for all $x \in \partial W$ and $t < \delta$.  
\eobs

\begin{noter}
A stratifold with boundary $T$ is an oriented/regular stratifold, if both $T - \partial T$ and $\partial T$ are oriented/regular stratifolds (the collar preserves the product orientation for oriented stratifolds). Similarly, $T$ is a $p$-stratifold if both $T-\partial T $ and $\partial T$ are $p$-stratifolds.
\end{noter}

From Section \ref{sec4}, until the end of this note, all future statements about stratifolds are meant as statements about $p$-stratifolds (see Note \ref{notapstr}). 

As Kreck mentions in \cite[p.~303]{kreck1}: ``The following observation is central for our construction of the zoo of bordism groups".
For two stratifolds $T$ and $T'$ with the same boundary $\partial T=\partial T'$, there is a stratifold structure for the gluing of stratifolds $T\cup_{\partial T} T'$ where the two collars are combined to produce a {\bf bicollar}, see the details in \cite[p.~36-37]{kreck}.  

\begin{defn}
Let $X$ be a topological space and $n$ a natural number. 
An $n$-dimensional {\bf singular stratifold} in $X$ is a closed (compact without boundary) $n$-dimensional stratifold $S$ 
together with a continuous map $f:S\longrightarrow X$. 
A {\bf singular bordism} between two $n$-dimensional singular stratifolds $(S,f)$ and $(S',f')$ 
is a compact stratifold with boundary $T$, with boundary $(S+S',f+f')$ together with a continuous map $F:T\longrightarrow X$ extending $f$ and $f'$. The sum of oriented stratifolds 
is given by 
$$
(S+S',f+f')=(S\sqcup -S',f\sqcup f)\,.
$$
Since one can glue $n$-dimensional singular stratifolds over a common boundary component, singular bordism is an equivalence relation.
The {\bf oriented stratifold homology} group $SH_n(X)$ consists of the equivalence classes of $n$-dimensional oriented singular stratifolds $(S,f)$ under this bordism relation.
The elements of these groups are denoted by $[S,f]$.
\end{defn}

The significance of the previous bordism groups lies in the positive solution for the Steenrod problem \cite{Eil} with the aim that a geometric object represents integral homology classes. 
The precise statement is as follows:

\bteo (Kreck \cite[Thm.~20.1, p.~186]{kreck})
The functor $SH_*$ defines a homology theory. Moreover, there exists a natural transformation $h$, from $SH_*(\,\cdot\, )$ to singular homology $H_*(\cdot\, ;\mathbb{Z})$, such that $h$ is an isomorphism for all CW complexes. 
\eteo

\section{$\integer_k$-Stratifolds}
\label{sec4}

In this section we combine the theory of $\ZZ_k$-manifolds from Sullivan and the theory of stratifolds from Kreck.

\begin{noter}\label{notapstr}
For this note, unless otherwise indicated, let us set the conventions that the stratifolds are oriented, regular $p$-stratifolds. Also, all the diffeomorphisms and embeddings of stratifolds are orientation-preserving.
\end{noter}

\begin{definicion} \label{closeds}A closed $n$-dimensional $\integer_k$-{\bf stratifold} is given by the triple $\mathcal{S}=(S,\delta S,\theta_i)$ where: 
\begin{itemize}
\item[(1)] $S$ is a compact, $n$-dimensional stratifold, with boundary $\partial S$,
\item[(2)] $\delta S$ is a compact $(n-1)$-dimensional stratifold without boundary, called the Bockstein, and
\item[(3)] $\theta_i:\delta S \longrightarrow \partial S$, with $i\in \ZZ_k$, are $k$ disjoint embeddings of stratifolds such that we have a diffeomorphism of stratifolds $\partial S=\bigsqcup_{i\in\ZZ_k}\theta_i(\delta S)$.
\end{itemize}
\end{definicion}

\begin{definicion}
\label{remi1}
There is an associated {\bf quotient} space $\tilde{S}$ given by the identification on $S$ of the $k$ copies of $\delta S$ together using the embeddings $\theta_i$'s. 
\end{definicion}

\bejem The class of closed stratifolds and the class of $\ZZ_k$-manifolds are the first examples of $\ZZ_k$-stratifolds.
\eejem

\bejem\label{Zkstrati}
Consider the $2$-dimensional $\ZZ_3$-stratifold given by the 
closed cone of the disjoint union of three circles $S=C(S^1\sqcup S^1\sqcup S^1)$, where 
the boundary is $\partial S=S^1\sqcup S^1\sqcup S^1$, and the Bockstein is $\delta S=S^1$, see Figure \ref{z3srtatifold}.
\begin{figure}[ht]
    \centering
    \includegraphics[scale=1]{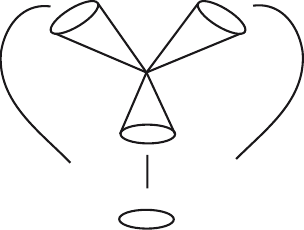}
    \caption{ A closed $\ZZ_3$-stratifold.}
    \label{z3srtatifold}
\end{figure}

\eejem

\begin{definicion}
    \label{frontera1} An $(n+1)$-dimensional $\integer_k$-{\bf stratifold with boundary} is given by the triple $\mathcal{T}=(T,\delta T,\psi_i)$ where:
        \begin{itemize}
            \item[(1)] $T$ is a compact $(n+1)$-dimensional stratifold, with boundary $\partial T$,
            \item[(2)] $\delta T$ is a compact $n$-dimensional stratifold with boundary, called the Bockstein, with boundary $\partial\delta T$, and
            \item[(3)] $\psi_i:\delta T \hookrightarrow \partial T$, with $i\in \ZZ_k$, are $k$ disjoint embeddings of stratifolds such that the triple 
            $$\left(\partial T- int(\bigsqcup_{i\in \ZZ_k} \psi_i(\delta T)), \partial \delta T, {\psi_i}|_{\partial \delta T}  \right)$$  
            defines a closed $n$-dimensional $\ZZ_k$-stratifold $(S,\delta S,\theta_i)$. 
         \end{itemize}  
         This closed $n$-dimensional $\ZZ_k$-stratifold is called the {\bf $\ZZ_k$-boundary} of the $\ZZ_k$-stratifold $\mathcal{T}$ and is denoted by $\partial \mathcal{T}=(S,\delta S,\theta_i)$. 
\end{definicion}

\begin{definicion}
There is a {\bf quotient} space $\tilde{T}$ resulting from the identification on $T$ of the $k$ copies of $\delta T$ together using the embeddings $\psi_i$'s.
\end{definicion}

\bejem
A $\integer_k$-manifold with boundary is an example of a $\integer_k$-stratifold with boundary.
\eejem

\bejem
Consider the $3$-dimensional $\ZZ_3$-stratifold with boundary $\mathcal{T}=(T,\delta T,\psi_i)$ where $T$ is the wedge of three closed balls $D^3\vee D^3\vee D^3$ by the north pole of the boundary spheres, hence the boundary is $\partial T=S^2\vee S^2\vee S^2$. The stratifold structure over the wedge point is given by the open cone of the disjoint union of three discs. The Bockstein is the two-dimensional closed disc $\delta T=D^2$ 
and $\psi_i:D^2\longrightarrow S^2\vee S^2\vee S^2$, with $i\in \ZZ_3$, are given by the embeddings of $D^2$ on each of the three 
southern hemispheres.  
The $\ZZ_3$-boundary $\partial \mathcal{T}=(S,\delta S,\theta_i)$ is
the $2$-dimensional $\ZZ_3$-stratifold from Example \ref{Zkstrati}, where $S=C(S^1\sqcup S^1\sqcup S^1)$ and the Bockstein is $\delta S=S^1$. See Figure \ref{zkboundary} for an illustration. 
\eejem

\begin{figure}[ht]
    \centering
    \includegraphics[scale=1]{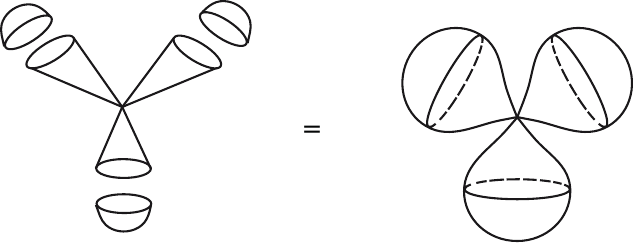}
\caption{A $\ZZ_3$-stratifold with boundary.}
    \label{zkboundary}
\end{figure}

\begin{definicion}\label{elcono}
The {\bf cone} of a $\ZZ_k$-stratifold $(S,\delta S,\theta_i)$ is defined as follows: take the closed cone $C(\delta S)$ (\cite[p.36]{kreck} or see Example \ref{conoe}) and use $k$-copies $kC(\delta S):=\bigsqcup_{i\in \ZZ_k}(C(\delta S)\times\{i\})$, to get the closed stratifold $S':=kC(\delta S)\sqcup_{\partial S} S$. Now, take the cone $C(S')$ which is an $(n+1)$-dimensional stratifold. The cone of the $\ZZ_k$-stratifold $(S,\delta S,\theta_i)$ is given by the $(n+1)$-dimensional $\ZZ_k$-stratifold with boundary $\mathcal{T}:=(C(S'),C(\delta S),\psi_i)$, where $\psi_i$ is the canonical inclusion in the $i$-component. The $\ZZ_k$-boundary of $\mathcal{T}$ is the original $\ZZ_k$-stratifold $(S,\delta S,\theta_i)$.
\end{definicion}    
        
       \begin{noter}
        For an $n$-dimensional $\ZZ_k$-stratifold $(S,\delta S, \theta_i)$, we need $n\geq 2$ in order to have that $C(S')$ and $C(\delta S)$ are oriented stratifolds. 
       \end{noter}

The technique to show that the cartesian product of two differentiable manifolds has a differentiable structure is called {\bf straightening the angle}. We follow the exposition given by Conner-Floyd \cite[Sec.~I.3]{CF}.
Let $\RR_+\subset \RR$ consists of all non-negative real numbers. We have the homeomorphism $\tau:\RR_+\times \RR_+\rightarrow \RR\times \RR_+$, defined using polar coordinates by $\tau(\rho,\theta)=(\rho,2\theta)$, $0\leq \theta\leq \pi/2$, 
such that the restriction $\tau$ is a diffeomorphism of $\RR_+\times \RR_+\setminus (0,0)$ onto $\RR\times \RR_+\setminus (0,0)$. Consider the product of two differentiable manifolds $B_1$ and $B_2$ with collars $U_1$ and $U_2$ of the boundaries $\partial B_1$ and $\partial B_2$, respectively. There are diffeomorphisms $\Phi_1:U_1\rightarrow \partial B_1\times \RR_+$ and $\Phi_2:U_2\rightarrow\partial B_2\times \RR_+$. Let $U=U_1\times U_2$; then $\Phi=\Phi_1\times \Phi_2$ is a homeomorphism of $U$ onto $\partial B_1\times \partial B_2\times \RR_+\times \RR_+$ and the composition with $\tau'=\op{id}\times \tau$ produces a homeomorphism $\tau'\circ\Phi:U\rightarrow\partial B_1\times \partial B_2\times \RR\times\RR_+$. The differentiable structure of $\partial B_1\times \partial B_2\times \RR\times\RR_+$ induces a differentiable structure on $U$ such that $\tau'\circ \Phi$ is a diffeomorphism. Then $U$ and $B_1\times B_2\setminus \partial B_1\times \partial B_2$ have differentiable structures, and they induce the same differentiable structure on their intersection. This structure is referred to, as obtained by straightening the angle.

\bprop \label{product}
If $\mathcal{S}=(S,\delta S,\theta_i)$ is a closed $n$-dimensional $\integer_k$-stratifold, then after straightening the angle, we obtain an $(n+1)$-dimensional $\integer_k$-stratifold with boundary $\mathcal{S}\times [0,1]:=(S\times [0,1], \delta S\times [0,1],\psi_i)$ where the $\ZZ_k$-boundary $(S',\delta S',{\theta'}_i)$ is given by
\begin{itemize}
\item $S'=S\times \{0\}\sqcup -S\times \{1\}$,
\item $\delta S'=\delta S\times \{0\}\sqcup -\delta S\times \{1\}$ and
\item ${\theta'}_i=\theta_i\times \{0\}\sqcup\theta_i\times\{1\}$.
\end{itemize}
\eprop

\bdem
The technique of straightening the angle works similarly for the product of two stratifolds with boundary. In fact, from Kreck \cite[App.~A.1-A.2]{kreck}, we can use local retractions to show that the product of stratifolds has a stratifold structure.

Consequently, the product space $S\times [0,1]$ has the structure of compact $(n+1)$-dimensional stratifold with boundary, where $
\partial \left(S\times [0,1]\right)=(\partial S\times [0,1])\cup (S\times\{0,1\})$ is also a stratifold with a collar into
$S\times [0,1]$. Similarly, the product $\delta S\times [0,1]$ is a compact $n$-dimensional stratifold with boundary and we have embeddings $\theta_i\times \op{id}_{[0,1]}:\delta S\times [0,1]\hookrightarrow \partial S\times [0,1]$, with $i\in \ZZ_k$. Denote by $\psi_i$ the embedding obtained as the composition 
of $\theta_i\times\op{id}_{[0,1]}$ with the inclusion $\partial S\times [0,1]\hookrightarrow \partial \left(S\times [0,1]\right)$. We associate the $\ZZ_k$-stratifold with boundary $(T,\delta T,\psi_i)$ where $T:=S\times [0,1]$ and the Bockstein $\delta T:=\delta S\times [0,1]$.

From Definition \ref{frontera1}, it remains to show that the triple $(S',\delta S',\theta'_i):=(\partial T-\op{int}(\partial S\times [0,1]),\partial\delta T,\psi_i|_{\partial \delta T})$ is a closed $n$-dimensional $\ZZ_k$-stratifold.
We have, $S'=S\times \{0,1\}$, $\delta S'=\delta S\times \{0,1\}$ and the embeddings are ${\theta'}_i=\psi_i|_{\delta S'}=\theta_i\times \{0,1\}$. The orientation of $S\times [0,1]$ induces opposite orientations for the two copies of $S$ associated to $\{0,1\}$ and similarly for $\delta S$.
The embedding $\theta_i\times \{0\}$ preserves the orientation, while the embedding $\theta_i\times \{1\}$ reverses the orientation. This shows that $(S',\delta S',\theta'_i)$ is a $\ZZ_k$-stratifold which is the $\ZZ_k$-boundary of $\mathcal{S}\times [0,1]$.
\edem

Now we state a gluing lemma for $\ZZ_k$-stratifolds. This result is a direct application of Proposition A.1 in Kreck's book \cite[Prop.~A.1, p.~194]{kreck}.
 
\begin{lema}\label{gluingo}Let $\mathcal{T}:=(T,\delta T,\psi_i)$ and $\mathcal{T}':=(T',\delta T',\psi'_i)$ be $\ZZ_k$-stratifolds with $\ZZ_k$-boundaries $\partial \mathcal{T}=\mathcal{S}\sqcup \mathcal{S}'$ and $\partial \mathcal{T}'=\mathcal{S}\sqcup \mathcal{S}''$, where $\mathcal{S}=(S,\delta S,\theta_i)$, $\mathcal{S}'=(S',\delta S',\theta_i')$ and $\mathcal{S}''=(S'',\delta S'',\theta''_i)$, are closed $\ZZ_k$-stratifolds.  Then there is a $\ZZ_k$-stratifold with boundary $$\mathcal{T}\sqcup_{\mathcal{S}}\mathcal{T}':=(T\sqcup_{S} T',\delta T\sqcup_{\delta S} \delta T',\psi_i\sqcup_{\delta S}\psi_i')\,,$$
where the $\ZZ_k$-boundary is $\mathcal{S}'\sqcup \mathcal{S}''$.
\end{lema}

\begin{proof}
We consider the stratifolds $Y_1:=S'\sqcup_{\partial S'} \bigsqcup_{i\in \ZZ_k}\psi_i(\delta T)$ and $Y_2:=S''\sqcup_{\partial S''} \bigsqcup_{i\in \ZZ_k}\psi'_i(\delta T')$. Thus the boundary of the stratifold $T$ and $T'$ are $\partial T=S\sqcup_{\partial S}Y_1$ and $\partial T'=S\sqcup_{\partial S}Y_2$, respectively. 
The work of Kreck \cite[Prop.~A.1, p.~194]{kreck} implies that the gluing $T\sqcup_S T'$ is a stratifold with boundary, where $\partial (T\sqcup_S T')=Y_1\sqcup_{\partial S}Y_2$. Similarly, the gluing $\delta T\sqcup_{\delta S} \delta T'$ is a stratifold with boundary, which is the Bockstein. Thus the $\ZZ_k$-boundary is precisely $(S'\sqcup S'',\delta S'\sqcup \delta S'',\theta'_i\sqcup \theta''_i)$ and the lemma follows.
\end{proof}

\begin{defn}
Let $X$ be a topological space and $n$ a natural number. 
An $n$-dimensional {\bf singular $\ZZ_k$-stratifold} in $X$ is a closed $n$-dimensional $\ZZ_k$-stratifold $\mathcal{S}=(S,\delta S,\theta_i)$ together with a continuous map $f:S\longrightarrow X$ such that
$f\circ\theta_i=f\circ\theta_j$ for $i,j\in \ZZ_k$.
A {\bf singular $\ZZ_k$-bordism} between two $n$-dimensional singular $\ZZ_k$-stratifolds $(\mathcal{S},f)$ and $(\mathcal{S}',f')$ is a $\ZZ_k$-stratifold with boundary $\mathcal{T}=(T,\delta T,\psi_i)$, with $\ZZ_k$-boundary 
$\partial \mathcal{T}=(S+S',\delta S+\delta S',f+f')$ together with a continuous map $F:T\longrightarrow X$,
with $F\circ \psi_i=F\circ \psi_j$ for $i,j\in \ZZ_k$, extending $f$ and $f'$.
Recall that the $\ZZ_k$-stratifolds consist of oriented, regular $p$-stratifolds.
In this definition, the sum of $\ZZ_k$-stratifolds is given by 
$$(S+S',\delta S+\delta S',f+f')=(S\sqcup -S',\delta S\sqcup -\delta S',f\sqcup f')\,.$$
Again, one can glue $n$-dimensional singular $\ZZ_k$-stratifolds over a common boundary component. We state in Proposition \ref{equivalence} that singular $\ZZ_k$-bordism is an equivalence relation.
The {\bf $\ZZ_k$-stratifold homology} group $SH_n(X;\ZZ_k)$ is given by the equivalence classes of $n$-dimensional singular $\ZZ_k$-stratifolds $(\mathcal{S},f)$ under the $\ZZ_k$-stratifold bordism relation. We denote by $[\mathcal{S},f]$ the elements of this group.
\end{defn}

As a consequence of Proposition \ref{product} and the gluing Lemma \ref{gluingo}, we obtain the following.

\begin{proposicion}\label{equivalence}
The $\integer_k$-stratifold bordism relation is an equivalence relation.
\end{proposicion}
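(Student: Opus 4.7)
The plan is to verify the three axioms of an equivalence relation separately, with reflexivity and symmetry essentially formal and transitivity requiring a gluing construction.

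For reflexivity, given a singular $\ZZ_k$-stratifold $(T,f)$ in $X$, I would use Proposition~\ref{product} to form the cylinder $(T\times I, S\times I)$, which is a $\ZZ_k$-stratifold with boundary whose $\ZZ_k$-boundary is $T\times\{0\}\sqcup(-T)\times\{1\}$ under the standard boundary orientation conventions. The time-independent extension $F(x,s)=f(x)$ restricts to $f$ on each end and inherits the compatibility $F\circ\psi_i=F\circ\psi_j$ from $f\circ\theta_i=f\circ\theta_j$. This shows $(T,f)+(-T,f)$ is $\ZZ_k$-bordant, hence $(T,f)\sim(T,f)$. For symmetry, if $(M,F)$ is a $\ZZ_k$-bordism for $(T_1,f_1)+(-T_2,f_2)$, then reversing the orientation of $M$ yields a $\ZZ_k$-bordism with the same underlying continuous map for $(T_2,f_2)+(-T_1,f_1)$.

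For transitivity, suppose $(M_{12},F_{12})$ exhibits $(T_1,f_1)\sim(T_2,f_2)$ and $(M_{23},F_{23})$ exhibits $(T_2,f_2)\sim(T_3,f_3)$. The copies of $T_2$ in the two $\ZZ_k$-boundaries carry opposite orientations (appearing as $-T_2$ in $M_{12}$ and as $T_2$ in $M_{23}$), so the stratifold-with-boundary gluing construction recalled in Section~3 (\cite{kreck}, pag~36) applies to form $M':=M_{12}\cup_{T_2}M_{23}$ as an oriented regular p-stratifold with boundary. I would then set $N':=N_{12}\cup_{S_2}N_{23}$, which is well defined because the conditions $\psi_i^{12}(N_{12})\cap T_2=\theta_i^{12}(S_2)$ and $\psi_i^{23}(N_{23})\cap T_2=\theta_i^{23}(S_2)$ force the two $N$-pieces to meet precisely along a common copy of $S_2$. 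Taking $\psi_i'$ to be the evident extension, one checks that $M'$ is a $\ZZ_k$-stratifold with boundary whose $\ZZ_k$-boundary is $T_1\sqcup(-T_3)$. The map $F':M'\to X$ defined piecewise by $F_{12}$ and $F_{23}$ is continuous since both agree with $f_2$ on $T_2$, and the compatibility $F'\circ\psi_i'=F'\circ\psi_j'$ descends from the corresponding compatibilities for $F_{12}$ and $F_{23}$, so $(M',F')$ witnesses $(T_1,f_1)\sim(T_3,f_3)$.

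The main obstacle is the gluing step in transitivity, where one must verify that joining two $\ZZ_k$-stratifolds with boundary along a piece of their $\ZZ_k$-boundary is again a $\ZZ_k$-stratifold with boundary. Concretely, this requires confirming that the induced orientations on $T_2$ from $M_{12}$ and $M_{23}$ are indeed opposite (so the gluing is orientation-preserving), that the $N$-pieces and embeddings $\psi_i$ match consistently along $\theta_i(S_2)$, and that the collar data required for the p-stratifold structure extends across the seam; the remaining checks on $F'$ and on the $\ZZ_k$-boundary decomposition of $\partial M'$ are then straightforward.
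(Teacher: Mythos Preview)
Your proposal is correct and follows essentially the same approach as the paper: reflexivity via the cylinder $T\times I$ from Proposition~\ref{product}, symmetry by orientation reversal, and transitivity by gluing the two bordisms along the shared copy of $T_2$. You in fact supply more detail than the paper does on the gluing of the $N$-pieces and the compatibility checks, which the paper leaves implicit.
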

To any closed $n$-dimensional stratifold $S$, there is an associated  closed $n$-dimensional stratifold given by the disjoint union $kS:=\bigsqcup_{i\in \ZZ_k}S\times\{i\}$. This assignment produces the homomorphism 
\begin{equation}\label{0xk}
\times k:SH_n(X)\longrightarrow SH_n(X)\,.
\end{equation}
To any closed $n$-dimensional $\ZZ_k$-stratifold $\mathcal{S}=(S,\delta S,\theta_i)$, there is an associated  closed $n$-dimensional $\ZZ_k$-stratifold 
given by the disjoint union $kS:=\bigsqcup_{i\in \ZZ_k} S\times \{i\}$, where the Bockstein is the whole boundary $\partial S$ and the embeddings  $\psi_i:\partial S\rightarrow \bigsqcup_{i\in \ZZ_k} \partial S\times \{i\}$ are the canonical inclusions. 
Moreover, the boundary $\partial S=\bigsqcup_{i\in\ZZ_k}\theta_i(\delta S)$ can be considered as a $k$-disjoint union and we can denote $\left(kS,k\delta S,\psi_i\right):=\left(kS,\partial S,\psi_i\right)$.
This assignment produces the homomorphism 
\begin{equation}\label{1xk}
    \times k^k: SH_n(X;\ZZ_k)\longrightarrow SH_n(X;\ZZ_k)\,,
\end{equation}
which we show below that it is trivial.

\begin{proposicion}\label{p2} 
For every integer $n\geq 0$, the homomorphism $\times k^k:SH_n(X;\ZZ_k)\longrightarrow SH_n(X;\ZZ_k)$ is zero.
\end{proposicion}

\begin{proof}
Take $(\mathcal{S},f)=((S,\delta S),f)$ a closed singular $\ZZ_k$-stratifold.
Consider the stratifold with boundary given by the cylinder $T:=kS\times [0,1]$ and the Bockstein $\delta T:=(\partial S\times [0,1])\sqcup_{\partial S\times \{1\}}(-S\times \{1\})$ with embeddings $$\psi_i:\delta T\hookrightarrow \partial T=\left[(S\times \{0\})\sqcup_{\partial S\times \{0\}}(\partial S\times [0,1])\sqcup_{\partial S\times \{1\}}(-S\times \{1\})\right]\times\{i\}\,,$$
which are the canonical inclusions. The $\ZZ_k$-boundary of the $\ZZ_k$-stratifold $(T,\delta T,\psi_i)$ is the $k$-disjoint union of $(S,\delta S)$.
\end{proof}

Similar to the work of Morgan--Sullivan \cite{sullivan}, we have the Bockstein sequence, which fits into the following commutative diagram  
\begin{equation}\label{yiyi}
    \xymatrix{\cdots \ar[r] & SH_n(X) \ar[r]^{\times k}\ar[d]^h &  SH_n(X) \ar[r]^r \ar[d]^h & SH_n(X;\ZZ_k)\ar[r]^\delta\ar[d]^{h_{\ZZ_k}} & SH_{n-1}(X)\ar[r]\ar[d]^h &\cdots \ar[r]&SH_0(X;\ZZ_k)\ar[d] \\
    \cdots \ar[r] & H_n(X) \ar[r]^{\times k} &H_n(X)\ar[r]^r & H_n(X;\ZZ_k)\ar[r]  & H_{n-1}(X)\ar[r] &\cdots\ar[r]&H_0(X;\ZZ_k)\,.}
\end{equation}
The description of the maps is as follows:
\begin{itemize}
    \item the reduction $r:SH_n(X)\longrightarrow SH_n(X;\ZZ_k)$ is obtained by considering an $n$-dimensional closed stratifold as a $\ZZ_k$-stratifold, i.e., $(S,\delta S,\theta_i)$ with $\delta S=\emptyset$;
    \item multiplication $\times k:SH_n(X)\longrightarrow SH_n(X)$ takes a singular stratifold $(S,f)$ in $X$ and assigns the class of the $k$-disjoint union of $S$ denoted by $[kS,kf]$;
    \item the Bockstein $\delta:SH_n(X;\ZZ_k)\longrightarrow SH_{n-1}(X)$ assigns to a singular $\ZZ_k$-stratifold $(\mathcal{S},f)$, with $\mathcal{S}=(S,\delta S,\theta_i)$, the class $[\delta S,f|_{\delta S}]$;
    \item the Hurewicz homomorphism for stratifolds $h:SH_n(X)\longrightarrow H_n(X)$, with $n\geq 0$, was constructed by Kreck \cite[p.~186-187]{kreck};
    \item the Hurewicz homomorphism for $\ZZ_k$-stratifolds 
    $h_{\ZZ_k}:SH_n(X;\ZZ_k)\longrightarrow H_n(X;\ZZ_k)$, with $n\geq 0$,
    is constructed in Section \ref{sec8} where we show the existence of the fundamental class for $\ZZ_k$-stratifolds.
\end{itemize}
We leave the proof of the exactness of \eqref{yiyi} for Section \ref{sec7}, where the commutativity follows after we construct the fundamental class in Section \ref{sec8}.

Finally, we spend the rest of the section discussing the properties of $SH_*(\cdot;\ZZ_k)$ as a functor.
Kreck \cite{kreck} proves the Eilenberg--Steenrod axioms for the bordism groups $SH_*(\cdot)$ in the category of $CW$-complexes. We have a functor, i.e., $\op{id}_*=\op{id}$ and $(g\circ f)_*=g_*\circ f_*$, which is  homotopy invariant, has the Mayer--Vietoris sequence,  $SH_n(*)=0$ for $n\neq 0$ and $SH_0(*)=\ZZ$.  Similarly, the $\ZZ_k$-stratifold homology satisfies the Eilenberg--Steenrod axioms, that we show in detail below. 
The proof of the Mayer--Vietoris sequence is in Appendix \ref{MV}.

\bdefin
A continuous map $g:X \To Y$ defines a morphism between the $\ZZ_k$-stratifold bordism groups by 
\begin{align*}
    g_*: SH_n(X;\integer_k) & \To SH_n(Y;\integer_k)\,, \\
    [\mathcal{S},f] & \To [\mathcal{S},g\circ f]  
\end{align*}  
for $\mathcal{S}=(S,\delta S,\theta_i)$ a closed $n$-dimensional $\ZZ_k$-stratifold.
\edefin
This defines a functor which is  homotopy invariant, as in the following proposition.

\begin{proposicion}\label{homotopy}
If $g$ and $g'$ are homotopic maps from $X$ to $Y$,  then 
$$g_* = {g'}_* : SH_n(X;\integer_k) \To SH_n(Y;\integer_k)\,.$$
\end{proposicion}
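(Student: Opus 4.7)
The plan is to mimic the classical cylinder construction: given a homotopy $H:X\times I\to Y$ between $g_0$ and $g_1$, and a singular $\ZZ_k$-stratifold $(T,f)$ representing a class in $SH_n(X;\ZZ_k)$, I would produce an explicit $\ZZ_k$-stratifold bordism between $(T,g_0f)$ and $(T,g_1f)$ by taking the product cylinder $T\times I$ together with the composite map $F=H\circ(f\times \mathrm{id}_I):T\times I\to Y$.

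First I would invoke Proposition \ref{product} to equip $(T\times I, S\times I)$ with the structure of a $(n+1)$-dimensional $\ZZ_k$-stratifold with boundary. The computation of the boundary in that proposition, combined with the orientation convention from \cite{kreck}, already appears in the reflexivity argument of Proposition \ref{equivalence}; it gives
$$\partial(T\times I)=\Bigl(\bigsqcup_{i=1}^{k}(S\times I)\Bigr)\cup \bigl(-T\times\{0\}\,\sqcup\, T\times\{1\}\bigr),$$
so that the $\ZZ_k$-boundary (Definition \ref{bordant_k}) of $T\times I$ is precisely $(-T)\sqcup T$. The embeddings $\psi_i:S\times I\hookrightarrow \partial(T\times I)$ are the products $\theta_i\times\mathrm{id}_I$, where the $\theta_i$ come from the $\ZZ_k$-structure of $T$.

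Next I would define $F:T\times I\to Y$ by $F(x,t)=H(f(x),t)$. This is continuous, and on the two ends it restricts to $g_0\circ f$ and $g_1\circ f$, as required. The compatibility on the identified boundary components is the one nontrivial bookkeeping step: for any $s\in S$ and $t\in I$,
$$F(\psi_i(s,t))=F(\theta_i(s),t)=H(f(\theta_i(s)),t),$$
and since $(T,f)$ is singular in the sense of the definition (so $f\circ\theta_i=f\circ\theta_j$ for all $i,j$), the right-hand side is independent of $i$. Thus $F\circ\psi_i=F\circ\psi_j$, which is exactly condition (3) in the definition of a $\ZZ_k$-bordism.

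Therefore $(T\times I, F)$ is a $\ZZ_k$-stratifold bordism exhibiting $(T,g_0f)\sqcup(-T,g_1f)$ as $\ZZ_k$-bordant to zero, which by the equivalence relation gives $[(T,g_0f)]=[(T,g_1f)]$ in $SH_n(Y;\ZZ_k)$, and hence $(g_0)_*=(g_1)_*$. I do not expect any serious obstacle here; the only points requiring care are (a) straightening the corner along $S\times\{0,1\}$ so that $T\times I$ is genuinely a p-stratifold with boundary (handled by Proposition \ref{product} via the reference to \cite{kreck}), and (b) getting the orientation of the two ends right so the bordism witnesses $[(T,g_0f)]-[(T,g_1f)]=0$ rather than a sum.
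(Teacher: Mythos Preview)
Your proposal is correct and follows exactly the same cylinder construction as the paper: take the homotopy $H$, form $T\times I$ (using Proposition \ref{product}), and use $H\circ(f\times\mathrm{id})$ as the bordism map. Your write-up is in fact considerably more careful than the paper's one-line proof, spelling out the boundary decomposition and verifying the compatibility $F\circ\psi_i=F\circ\psi_j$ from $f\circ\theta_i=f\circ\theta_j$, which the paper leaves implicit.
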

\bdem
There is a homotopy $G: X \times [0,1] \To Y$ between $g$ and $g'$. Take $[\mathcal{S},f] \in SH_n(X;\integer_k)$, and hence 
$[\mathcal{S}\times [0,1],G\circ (f\times \op{id})]$
is a singular $\integer_k$-stratifold bordism (see Proposition \ref{product}) between $g_*([\mathcal{S},f])$ and ${g'}_*([\mathcal{S},f])$.
\edem

\begin{proposicion}\label{p1}
For the $\ZZ_k$-stratifold bordism group, we have
\begin{equation*}
    SH_n(*;\ZZ_k)=
    \left\{\begin{array}{cc}
    \ZZ_k & \textrm{ for }n=0 \\
     0    & \textrm{ for }n\neq 0\,.
\end{array} \right.
\end{equation*}
\end{proposicion}

\begin{proof}
An important assumption here is that every $n$-dimensional $\ZZ_k$-stratifold $(S,\delta S)$ is formed by oriented stratifolds $S$ and $\delta S$. For $n\geq 2$, we use the first horizontal long exact sequence of \eqref{yiyi}, with $SH_n(*)=0$ and $SH_{n-1}(*)=0$, and we conclude $SH_n(*;\ZZ_k)=0$. For $n=1$, the sequence \eqref{yiyi} becomes 
\begin{equation*}
    0\longrightarrow SH_1(*;\ZZ_k)\longrightarrow \ZZ\stackrel{\times k}{\longrightarrow}\ZZ\stackrel{r}{\longrightarrow}SH_0(*;\ZZ_k)\longrightarrow 0\,,
\end{equation*}
then $SH_1(*;\ZZ_k)=0$ and $SH_0(*;\ZZ_k)=\ZZ_k$. 
\end{proof}

A geometric approach for the previous proposition is as follows: for any closed $n$-dimensional $\ZZ_k$-stratifold $\mathcal{S}=(S,\delta S,\theta_i)$, with $n>1$, 
we take the cone as in Definition \ref{elcono}. Thus we consider the usual cone $C(\delta S)$ and use $k$-copies $kC(\delta S)$
to get the closed stratifold $S':=kC(\delta S)\sqcup_{\partial S} S$. Then we form the $(n+1)$-dimensional $\ZZ_k$-stratifold with boundary $\mathcal{T}:=(C(S'),C(\delta S),\psi_i)$ where $\psi_i$ is the canonical inclusion on the $i$th component. The $\ZZ_k$-boundary of $\mathcal{T}$ is the original $\ZZ_k$-stratifold $(S,\delta S,\theta_i)$.
For $n=1$, we have a disjoint union of circles and intervals with orientation. 
Since each interval has the boundary $\{+,-\}$, then the number of intervals must be divided by $k$. Thus after capping the circles with discs by Proposition \ref{p2}, this element is trivial in $SH_1(*;\ZZ_k)$.
Finally, for $n=0$, the generator of $SH_0(*;\ZZ_k)$ is the closed $0$-dimensional $\ZZ_k$-stratifold $(*,\emptyset,\op{id}_\emptyset)$, where we use Proposition \ref{p2}.

\section{The Bockstein sequence}
\label{sec7}

Previously, we have defined the $k$-disjoint union homomorphisms for stratifolds and $\ZZ_k$-stratifolds. These homomorphisms are as follows $\times k: SH_n(X)\longrightarrow SH_n(X)$ and $\times k^k:SH_n(X;\ZZ_k)\longrightarrow SH_n(X;\ZZ_k)$, defined in \eqref{0xk} and \eqref{1xk}, respectively. The second is the trivial homomorphism by Proposition \ref{p2}.
There is a third $k$-disjoint union homomorphism of the form 
\begin{equation}\label{2xk}\times k^{k^2}: SH_n(X;\ZZ_k)\longrightarrow SH_n(X;\ZZ_{k^2})\,,\end{equation}
which assigns to an $n$-dimensional $\ZZ_k$-stratifold $(S,\delta S)$ the $n$-dimensional $\ZZ_{k^2}$-stratifold 
$\left(kS,\delta S\right)$. 
There is a projection homomorphism $$p:SH_n(X;\ZZ_{k^2})\longrightarrow SH_n(X;\ZZ_k)\,,$$ which 
assigns to an $n$-dimensional $\ZZ_{k^2}$-stratifold $(S,\delta S)$ the $n$-dimensional $\ZZ_k$-stratifold $(S,k\delta S)$. 

We skip the embeddings and singular maps in defining these homomorphisms to simplify the notation.

 These homomorphisms satisfy a compatibility condition with the reduction and the Bockstein homomorphisms from the last section.

\begin{proposicion}\label{prim}
Let $r:SH_n(X)\longrightarrow SH_n(X;\ZZ_k)$ and $r:SH_n(X)\longrightarrow SH_n(X;\ZZ_{k^2})$ be the reduction homomorphisms and let $\delta :SH_n(X;\ZZ_{k^2})\longrightarrow SH_{n-1}(X)$ be the Bockstein homomorphism for $\ZZ_{k^2}$-stratifolds.
We have the following commutative diagrams:
$$
        \xymatrix{ SH_n(X;\ZZ_{k^2})\ar[r]^p\ar[d]^\delta& SH_n(X;\ZZ_k)\ar[d]^\delta\\ SH_{n-1}(X)\ar[r]^{\times k} & SH_{n-1}(X)\,,}
    \xymatrix{SH_n(X)\ar[r]^{\times k}\ar[d]^r & SH_n(X)\ar[d]^r\\ SH_n(X;\ZZ_k)\ar[r]^{\times k^k} & SH_n(X;\ZZ_k)\,,}
        \xymatrix{SH_n(X)\ar[r]^{\times k}\ar[d]^r & SH_n(X)\ar[d]^r\\ SH_n(X;\ZZ_k)\ar[r]^{\times k^{k^2}} & SH_n(X;\ZZ_{k^2})\,,}
$$    
and
    $$
    \xymatrix{SH_n(X)\ar[rd]^r\ar[d]^r&\\ SH_n(X;\ZZ_{k^2})\ar[r]^{p} & SH_n(X;\ZZ_k)\,,}
    \xymatrix{SH_n(X;\ZZ_{k^2})\ar[rd]^p&\\ SH_n(X;\ZZ_k)\ar[u]_{\times k^{k^2}}\ar[r]^{\times k^k} & SH_n(X;\ZZ_k)\,.}$$
\end{proposicion}
\begin{proof}
We show the commutativity of the first three squares. Take $(S,\delta S)$ an $n$-dimensional $\ZZ_{k^2}$-stratifold.
We have 
$k\delta S:=\times k(\delta S)=\times k\circ \delta (S,\delta S)$ and $k\delta S=\delta (S,k\delta S)=\delta \circ p(S,\delta S)$. 
Now, for $S$ a closed $n$-dimensional stratifold, we obtain 
$r\circ \times k(S)=(kS,\emptyset)$ and $\times k^k\circ r(S)=\times k^k(S,\emptyset)=(kS,\emptyset)$ in $SH_n(X;\ZZ_k)$. 
Similarly, we can show the commutativity of the third diagram with $(kS,\emptyset)$ in $SH_n(X;\ZZ_{k^2})$.
Finally, we show the commutativity of the last two diagrams. We have $r(S)=(S,\emptyset)=p(S,\emptyset)=p(r(S))$ and 
$p\circ \times k^{k^2}(S,\delta S)=p(kS,\delta S)=(kS,k\delta S)=\times k^k(S,\delta S)$. The commutativity of the second and fifth diagrams means that the composition is trivial by Proposition \ref{p2}.
\end{proof}

The following result shows how a  stratifold bordism gives rise to a  $\ZZ_k$-stratifold bordism.

\begin{proposicion}\label{solovino}
Assume that $\delta S$ and $\delta S'$ are two $n$-dimensional closed stratifolds such that there is a bordism of stratifolds $T$ with boundary $\partial T=\delta S\sqcup -\delta S'$. In addition, suppose the pair $(S,\delta S)$ is an $n$-dimensional $\ZZ_k$-stratifold. Then, 
$(S,\delta S)$ is $\ZZ_k$-bordant to $(S\sqcup_{\partial S}-kT,\delta S')$.
\end{proposicion}
\begin{proof}
This is similar to Proposition \ref{product}. We consider the product space $T':=\left(S\sqcup_{\partial S}-kT\right)\times [0,1]$ and the Bockstein $\delta T':=(\delta S'\times [0,1])\sqcup_{\delta S'\times \{1\}}-T$ with embeddings 
$$\psi_i:\delta T'\hookrightarrow \partial T'=\left(\left(S\sqcup_{\partial S}-kT\right)\times \{0\}\right)\sqcup_{k\delta S'\times \{0\}} k(\delta S'\times [0,1]) \sqcup_{k\delta S'\times \{1\}} \left(\left(S\sqcup_{\partial S}-kT\right)\times \{1\}\right)\,.$$
The $\ZZ_k$-stratifold $(T',\delta T',\psi_i)$ is a $\ZZ_k$-bordism between $(S,\delta S)$ and  
$(S\sqcup_{\partial S}-kT,\delta S')$.
\end{proof}

\begin{observacion}\label{obsdec}
Because of the relevance of the previous result for our work, in Figure \ref{ztst}, we illustrate two $\ZZ_k$-stratifolds that are $\ZZ_k$-bordant by  the previous proposition. 
\begin{figure}
    \centering
   \includegraphics[scale=0.7]{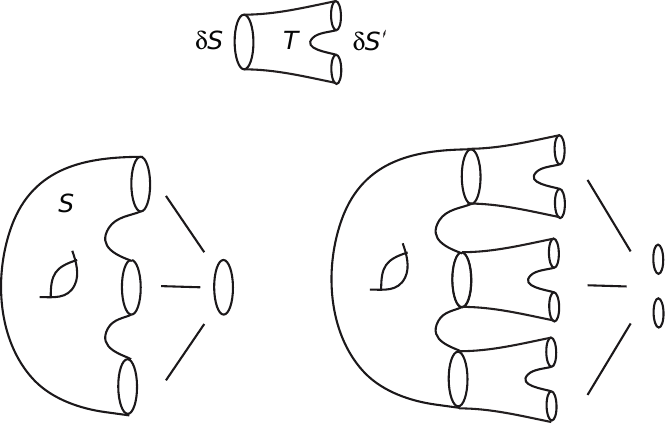}
    \caption{The bordism $T$ from $\delta S$ and $\delta S'$ and the two $\ZZ_k$-bordant $\ZZ_k$-stratifolds}
    \label{ztst}
\end{figure}
Notice that, whenever it is possible to connect $\delta S$ to the empty set by a bordism $T$, then the $\ZZ_k$-stratifold $(S,\delta S)$ is $\ZZ_k$-bordant to $(S\sqcup_{\partial S}-kT,\emptyset)$.
\end{observacion}

Similar to the work of Morgan--Sullivan \cite{sullivan}, the $\ZZ_k$-stratifolds bordisms groups have a Bockstein exact sequence associated with $0\rightarrow \ZZ\stackrel{\times k}{\rightarrow}\ZZ\rightarrow\ZZ_k\rightarrow 0$. There is also the other Bockstein exact sequence associated with $0\rightarrow\ZZ_k\stackrel{\times k}{\rightarrow}\ZZ_{k^2}\rightarrow\ZZ_k\rightarrow 0$. 
These two sequences are part of the commutative diagram below. 
The primary purpose of the present section is to show the exactness of the two Bockstein exact sequences. 

\begin{equation}\label{yiyi43}
    \xymatrix{\cdots \ar[r] & SH_n(X) \ar[r]^{\times k}\ar[d]^r &  SH_n(X) \ar[r]^r \ar[d]^r & SH_n(X;\ZZ_k)\ar[r]^\delta\ar[d]^{=} & SH_{n-1}(X)\ar[r]\ar[d]^r &\cdots\\
    \cdots \ar[r] & SH_n(X;\ZZ_k) \ar[r]^{\times k} &SH_n(X;\ZZ_{k^2})\ar[r]^p & SH_n(X;\ZZ_k)\ar[r]^{\tilde{\delta}}  & SH_{n-1}(X;\ZZ_k)\ar[r] &\cdots}
\end{equation}

\begin{proposicion}\label{Bock1}
The following sequence is exact 
$$\cdots \longrightarrow SH_n(X)\stackrel{\times k}{\longrightarrow} SH_n(X)\stackrel{r}{\longrightarrow}SH_n(X;\ZZ_k)\stackrel{\delta}{\longrightarrow}SH_{n-1}(X)\stackrel{\times k}{\longrightarrow}\cdots$$
\end{proposicion}

\begin{proof}
We have 
$r\circ (\times k)=(\times k^k)\circ r=0$ by Proposition \ref{prim}. In addition, 
we obtain $\delta\circ r=0$ since the Bockstein of a (closed) stratifold is empty. Moreover, $\times k\circ \delta =0$ since we start with a $\ZZ_k$-stratifold $(S,\delta S,\theta_i)$ where the boundary $\partial S$ is diffeomorphic to $\bigsqcup_{i\in \ZZ_k}\theta_i(\delta S)$. 

Now, we show  exactness: 
\begin{itemize}
    \item $\op{ker}r\subset \op{im}(\times k)$. Consider an $n$-dimensional singular stratifold $(S,f)$ with $r([S,f])=0$. Then  there is 
    an $(n+1)$-dimensional $\ZZ_k$-bordism $(\mathcal{T},F)=((T,\delta T),F)$ such that the $\ZZ_k$-boundary $\partial (T,\delta T)=(S,\emptyset)$ and $F$ extends $f$. Consequently, we obtain $\partial \delta T=\delta S=\emptyset$ and hence $\partial T=S \sqcup k\delta T$, and we can take the singular stratifolds given by $(\delta T,F|_{\partial T})$ with the reverse orientation. We have $k[-\delta T,-F|_{\partial T}]=[S,f]$. 
    \item $\op{ker}\delta\subset \op{im}r$. Consider an $n$-dimensional singular $\ZZ_k$-stratifold $(\mathcal{S},f)=((S,\delta S),f)$ with $\delta ([\mathcal{S},f])=0$. Then $(\delta S,f|_{\delta S})$ is the boundary of an $n$-dimensional singular stratifold $(T,F)$, i.e., $\partial T=\delta S$ and $F$ extends $f|_{\partial S}$. 
    Proposition \ref{solovino} and Remark \ref{obsdec}, imply that the $\ZZ_k$-stratifold $(S\sqcup_{\partial S} -kT,\emptyset)$ is $\ZZ_k$-bordant to $\ZZ_k$-stratifold $(S,\delta S)$.
   There is a map $f':S\sqcup_{\partial S} -kT\rightarrow X$ which extends the singular map $f$. Therefore, the singular $\ZZ_k$-stratifold $((S\sqcup_{\partial S} -kT,\emptyset),f')$ is $\ZZ_k$-bordant to the original singular $\ZZ_k$-stratifold $((S,\delta S),f)$.
\item $\op{ker}(\times k)\subset \op{im}\delta$. Consider an $(n-1)$-dimensional singular stratifold $(S,f)$ with $\times k([S,f])=0$. Then there exists an $n$-dimensional singular stratifold $(T,F)$ with $\partial T=k S$ and $F$ extends $kf$.
Thus we can take the $n$-dimensional singular $\ZZ_k$-stratifold $((T,S),F)$ and we obtain $\delta([(T,S),F])=[S,f]$.

\end{itemize}

\end{proof}
Denote by $\tilde{\delta}$ the following composition $SH_n(X;\ZZ_k)\stackrel{\delta}{\rightarrow} SH_{n-1}(X)\stackrel{r}{\rightarrow}SH_{n-1}(X;\ZZ_k)$.

\begin{proposicion}\label{Bock2}
The following sequence is exact 
$$\cdots \longrightarrow SH_n(X;\ZZ_k)\stackrel{\times k^{k^2}}{\longrightarrow} SH_n(X;\ZZ_{k^2})\stackrel{p}{\longrightarrow}SH_n(X;\ZZ_{k})\stackrel{\tilde{\delta}}{\longrightarrow}SH_{n-1}(X;\ZZ_{k})\stackrel{\times k^{k^2}}{\longrightarrow}\cdots$$
\end{proposicion}

\begin{proof}
We have  $p\circ (\times k^{k^2})=\times k^k= 0$ by Proposition \ref{prim}. Again we use Proposition \ref{prim}, and we get 
$$\tilde{\delta}\circ p= r\circ \delta\circ p =(r\circ (\times k))\circ \delta=0\,.$$
Similarly, we obtain 
$$(\times k^{k^2})\circ \tilde{\delta}= (\times k^{k^2})\circ r\circ \delta =(r\circ (\times k))\circ \delta=0\,.$$

Now, we show exactness:
\begin{itemize}
    \item $\op{ker} p\subset \op{im}(\times k^{k^2})$. Consider an $n$-dimensional singular $\ZZ_{k^2}$-stratifold $(\mathcal{S},f)=((S,\delta S),f)$ with $p([\mathcal{S},f])=0$. Then there exists an $(n+1)$-dimensional singular $\ZZ_k$-stratifold with boundary $(\mathcal{T},F)=((T,\delta T),F)$ such that the $\ZZ_k$-boundary is $\partial \mathcal{T}=(S,k\delta S)$. Thus we can consider $k$-copies of $\delta T$ with the reverse orientation, which are glued with $S$ to form a closed
    stratifold $S\sqcup_{\partial S} -k\delta T$ which is the boundary of $T$. 
    There are $k$ disjoint embeddings $c_i:\delta S\times [0,\epsilon)\hookrightarrow \delta T$ induced by the collar of $\partial S$ into the $k$ copies of $\delta T$. Denote by 
    $\overline{\delta T}:=\delta T-\bigsqcup_{i\in\ZZ_k}c_i(\delta S\times [0,\epsilon/2])$.
    We consider the $\ZZ_{k^2}$-stratifold with boundary $(T,\delta S\times [0,\epsilon/2],\psi_i)$, where $\psi_i=c_i|_{\delta S\times [0,\epsilon/2]}$.
 This is a $\ZZ_{k^2}$-bordism between $(S,\delta S)$ and $(k\overline{\delta T},\delta S)$. This means that $(\times k^{k^2})([\overline{\delta T},\delta S])=[k\overline{\delta T},\delta S]=[S,\delta S]$.
    
    \item $\op{ker} \tilde{\delta}\subset \op{im}p$. Consider an $n$-dimensional singular $\ZZ_k$-stratifold $(\mathcal{S},f)=((S,\delta S),f)$ with $\tilde{\delta}([\mathcal{S},f])=0$. Since $\tilde{\delta}=r\circ \delta $, this means that there exists an $n$-dimensional singular $\ZZ_k$-bordism $(\mathcal{T},F)=((T,\delta T),F)$ such that the $\ZZ_k$-boundary is $((\delta S,\emptyset),f|_{\delta S})$. Therefore, we obtain 
    $\partial T=\delta S\sqcup k\delta T$, $F$ extends $f|_{\delta S}$ and $\partial\delta T=\emptyset$. Consequently, we consider $k$-copies of $T$ with the reverse orientation, glued with $S$ to form the $n$-dimensional stratifold with boundary $S'=-k T\sqcup_{\partial S} S$. There is a map $f':S'\rightarrow X$ also constructed by the gluing.
    Thus we have an $n$-dimensional singular $\ZZ_{k^2}$-stratifold $((S',\delta T),f')$. We have $p([(S',\delta T),f'])=[(S',k\delta T),f']$ which is equal to $(\mathcal{S},f)$ by Proposition \ref{solovino}.

    \item $\op{ker} (\times k^{k^2})\subset \op{im}(\tilde{\delta})$. Consider an $(n-1)$-dimensional singular $\ZZ_k$-stratifold $(\mathcal{S},f)=((S,\delta S,\theta_i),f)$ with $\times k^{k^2}([\mathcal{S},f])=0$. Then there is an $n$-dimensional singular $\ZZ_{k^2}$-stratifold $(\mathcal{T},F)=((T,\delta T,\psi_i),F)$ with $\ZZ_{k^2}$-boundary $((kS,\delta S),kf)$. Therefore, $\partial T=kS\sqcup_{\partial kS}-k^2\delta T$ 
    is a closed $n$-dimensional stratifold. By the definition of the $\ZZ_{k^2}$-boundary of a $\ZZ_{k^2}$-stratifold with boundary, see Definition \ref{frontera1}, hence $\delta S=\partial \delta T$ and the embeddings are $\theta_i=\psi_i|_{\partial \delta T}$. Therefore, the gluing $S\sqcup_{\partial S} k\delta T$ is a closed $n$-dimensional stratifold and 
    in addition, we obtain 
    $\partial T$ is the disjoint union of $k$-copies of $S\sqcup_{\partial S} k\delta T$. 
    Consequently, we take the $(n+1)$-dimensional singular $\ZZ_k$-stratifold 
    $((T,S\sqcup_{\partial S} k\delta T),F)$ and $\tilde{\delta}([(T,S\sqcup_{\partial S} k\delta T),F])=[(S\sqcup_{\partial S} k\delta T,\emptyset),F|_{S\sqcup_{\partial S} k\delta T}]$ which is $\ZZ_k$-bordant to $((S,\delta S,\theta_i),f)$ by Proposition \ref{solovino}.
    \end{itemize}

\end{proof}

\section{Fundamental classes}
Recall from Section \ref{sec2} that a closed $\ZZ_k$-manifold $(M,\delta M,\theta_i)$ has an associated quotient space $\tilde{M}$. 
Similarly, we write $\tilde{\partial M}$ to mean the quotient space given by the identification on $\partial M$ of the $k$ copies of $\delta M$. Notice that in this case, we have $\tilde{\partial M}\cong\delta M$. 
Similarly, for a $\ZZ_k$-manifold with boundary $(B,\delta B,\psi_i)$, we denote by $\tilde{B}$ and $\tilde{\partial B}$ the quotient space obtained by the identification of the $k$ copies of $\delta B$ on $B$ and $\partial B$, respectively.

\label{sec8}
In this section, we will construct a natural transformation from $\ZZ_k$-bordism stratifold homology to homology with $\ZZ_k$-coefficients
\begin{equation}
    \Phi:SH_*(X;\ZZ_k)\longrightarrow H_*(X;\ZZ_k)\,.
\end{equation}

We can define this natural transformation for $\ZZ_k$-manifolds \cite{sullivan}. There is no formal proof of this fact in the literature, so we provide a detailed argument below.
The case of $\ZZ_k$-stratifolds uses some results of Tene \cite{tene2}. We give the details of these statements at the end of this section.

Assume $\mathcal{M}=(M,\partial M,\theta_i)$ is a closed $n$-dimensional $\ZZ_k$-manifold and there is a continuous map $f:M\rightarrow X$ to the topological space $X$. 
There exists the {\bf fundamental class} $[\mathcal{M}]_{\ZZ_k}\in H_n(\tilde{M};\ZZ_k)$
and for an element $[\mathcal{M},f]\in \Omega_n(X;\ZZ_k)$, there is a natural transformation  defined by 
\begin{equation}\label{we1}
    \Phi([\mathcal{M},f])=\tilde{f}_*([\mathcal{M}]_{\ZZ_k})\,,
\end{equation}
where $\tilde{f}:\tilde{M}\rightarrow X$ is the induced map from the quotient space $\tilde{M}$.

We can find the fundamental class $[\mathcal{M}]_{\ZZ_k}$ using the following commutative diagram:
\begin{equation}\label{eqod1}\xymatrix{
 \cdots \ar[r] & H_{n}(\partial M;\ZZ_k) \ar[r]\ar[d]^{q_*}    & H_{n}(M;\ZZ_k) \ar[d]^{q_*}\ar[r]^{i_*}    & H_{n}(M,\partial M;\ZZ_k) \ar[d]^{q_*}\ar[r]^{\partial}    & H_{n-1}(\partial M;\ZZ_k) \ar[d]^{q_*}\ar[r]  & \cdots \\
\cdots \ar[r] & H_{n}(\tilde{\partial M};\ZZ_k) \ar[r]    & H_{n}(\tilde{M};\ZZ_k) \ar[r]^{i_*}   & H_{n}(\tilde{M}, \tilde{\partial M};\ZZ_k) \ar[r]^{\partial} & H_{n-1}(\tilde{\partial M};\ZZ_k) \ar[r]        & \cdots
}\end{equation}
In the previous diagram, the rows are the long exact sequences associated with the pairs $(M,\partial M)$ and $(\tilde{M},\tilde{\partial M})$. 
The quotient map induces the vertical morphisms. We start with the well known fundamental class $[M,\partial M]\in H_n(M,\partial M;\ZZ_k)$ which satisfies $\partial([M,\partial M])=[\partial M]$
and 
\begin{equation}\begin{array}{rll}
    H_{n-1}(\tilde{\partial M};\ZZ_k)&\stackrel{\cong}{\longrightarrow} &H_{n-1}(\delta M;\ZZ_k) \,.\\
    q_*([\partial M]) &\longmapsto &k[\delta M] 
\end{array}
\end{equation}
Thus $q_*([\partial M])=0$ by the coefficients. We have the isomorphism $q_*:H_n(M,\partial M;\ZZ_k)\rightarrow H_n(\tilde{M}, \tilde{\partial M};\ZZ_k)$ and  
$H_n(\tilde{\partial M};\ZZ_k)\cong H_n\left(\delta M;\ZZ_k\right)=0$. 
Therefore, there exists a unique class $[\mathcal{M}]_{\ZZ_k}\in H_n(\tilde{M};\ZZ_k)$ with the property  \begin{equation}\label{cosa}i_*([\mathcal{M}]_{\ZZ_k})=q_*([M,\partial M])\,.\end{equation} 

The following lemma is needed to show the existence of relative fundamental classes for $\ZZ_k$-manifolds.

\begin{lema}\label{gato}
Let $M$ be a closed compact oriented manifold of dimension $n$. Assume $M$ is the gluing of two compact oriented manifolds with boundary of dimension $n$, i.e., \begin{equation}M=M_1\sqcup_{\partial M_1=\partial M_2}M_2\,.\end{equation} Then the composition 
    $H_n(M)\stackrel{i_*}{\longrightarrow} H_n(M,M_1)\stackrel{\cong}{\longrightarrow} H_n(M_2,\partial M_2)$
sends the fundamental class $[M]\in H_n(M)$ to the relative fundamental class $[M_2,\partial M_2]\in H_n(M_2,\partial M_2)$, where the isomorphism $H_n(M,M_1)\stackrel{\cong}{\longrightarrow} H_n(M_2,\partial M_2)$ is provided by excision.
\end{lema}

\begin{proof}
We have the commutative diagram 
\begin{equation}
\xymatrix{ & H_n(M_2,\partial M_2)\ar[d]^{exc}\ar[r] & H_n(M_2,M_2-\{x\})\ar[d]^{\cong}\\
H_n(M)\ar[r] & H_n(M,M_1) \ar[r] & H_n(M,M-\{x\})\,.}
\end{equation}
where $x\in  \stackrel{\circ}{M}_2=M_2-\partial M_2$. By classic algebraic topology \cite[Lemma~3.27]{hatcher}, the two rows send the fundamental classes to the generators associated with the point $x$, which shows the lemma.
\end{proof}

Now we show the existence of a {\bf relative fundamental class} of an $(n+1)$-dimensional $\ZZ_k$-manifold with boundary 
$\mathcal{B}=(B,\partial B,\psi_i)$ where the $\ZZ_k$-boundary is $\partial \mathcal{B}=(M,\delta M,\theta_i)$.
We find the fundamental class $[\mathcal{B},\partial\mathcal{B}]_{\ZZ_k}$ using the following commutative diagram:

 \begin{equation}\label{eq132}\xymatrix{
 \cdots \ar[r] & H_{n+1}(\partial B,M;\ZZ_k) \ar[r]\ar[d]^{q_*}    & H_{n+1}(B,M;\ZZ_k) \ar[d]^{q_*}\ar[r]^{i_*}    & H_{n+1}(B,\partial B;\ZZ_k) \ar[d]^{q_*}\ar[r]^{\partial}    & H_{n}(\partial B,M;\ZZ_k) \ar[d]^{q_*}\ar[r]  & \cdots \\
\cdots \ar[r] & H_{n+1}(\tilde{\partial B},\tilde{M};\ZZ_k) \ar[r]    & H_{n+1}(\tilde{B},\tilde{M};\ZZ_k) \ar[r]^{i_*}   & H_{n+1}(\tilde{B}, \tilde{\partial B};\ZZ_k) \ar[r]^{\partial} & H_{n}(\tilde{\partial B},\tilde{M};\ZZ_k) \ar[r]        & \cdots
}\end{equation}
In the previous diagram, the rows are the long exact sequences associated with the triples $(B,\partial B, M)$ and $(\tilde{B},\tilde{\partial B}, \tilde{M})$, respectively, and the quotient map induces the vertical morphisms. 
We start with the relative fundamental class $[B,\partial B]\in H_n(B,\partial B;\ZZ_k)$ and using Lemma \ref{gato} we have 
$\partial [B,\partial B]=[k\delta B,\partial M]$, where $k\delta B:=\bigsqcup_{i\in \ZZ_k}\psi_i(\delta B)$, and 
\begin{equation}
\begin{array}{rll}
  H_{n}(\tilde{\partial B},\tilde{M};\ZZ_k)& \stackrel{\cong}{\longrightarrow}  & H_{n}(\delta B,\delta M;\ZZ_k) \,.\\
  q_*[k\delta B, \partial M]& \longmapsto  & k[\delta B, \delta M]
\end{array}
\end{equation} 
Thus $q_*[k\delta B, \partial M]=0$ by the coefficients. We have an isomorphism $q_*:H_{n+1}(B,\partial B;\ZZ_k)\rightarrow H_{n+1}(\tilde{B}, \tilde{\partial B};\ZZ_k)$ and 
$H_{n+1}(\tilde{\partial B},\tilde{M};\ZZ_k)\cong H_{n+1}\left(\delta B,\delta M;\ZZ_k\right)=0$. Therefore, there exists a unique  
class $[\mathcal{B},\partial \mathcal{B}]_{\ZZ_k}\in H_{n+1}(\tilde{B},\tilde{M};\ZZ_k)$ with the property  \begin{equation}i_*([\mathcal{B},\partial\mathcal{B}]_{\ZZ_k})=q_*([B,\partial B])\,.\end{equation}

\begin{proposicion}
Let $\mathcal{B}=(B,\partial B,\psi_i)$ be an $(n+1)$-dimensional $\ZZ_k$-manifold with boundary, where the $\ZZ_k$-boundary is $\partial \mathcal{B}=(M,\delta M,\theta_i)$. Then, the image of $[\mathcal{B}, \partial\mathcal{B}]_{\ZZ_k}$ under the map $\partial: H_{n+1}(\tilde{B},\tilde{M};\ZZ_k)\longrightarrow H_n(\tilde{M};\ZZ_k)$ is the class $[\partial\mathcal{B}]_{\ZZ_k}$.
\end{proposicion}

\begin{proof}
We apply the differential maps to the middle square in \eqref{eq132} and we obtain the following commutative cube: 
\begin{equation}\label{njor}
\begin{tikzcd}[row sep=2.5em]
H_{n+1}(B,M;\ZZ_k)  \arrow[rr,"i_*"] \arrow[dr,swap,"\partial"] \arrow[dd,swap,"q_*"] &&
  H_{n+1}(B,\partial B;\ZZ_k) \arrow[dd,swap,"q_*" near start] \arrow[dr,"\partial"] \\
& H_{n}(M;\ZZ_k) \arrow[rr,crossing over,"i_*" near start] &&
  H_{n}(\partial B;\ZZ_k) \arrow[dd,"q_*"] \\
H_{n+1}(\tilde{B},\tilde{M};\ZZ_k) \arrow[rr,"i_*" near end] \arrow[dr,swap,"\partial"] && H_{n+1}(\tilde{B}, \tilde{\partial B};\ZZ_k) \arrow[dr,swap,"\partial"] \\
& H_{n}(\tilde{M};\ZZ_k) \arrow[rr,"i_*"] \arrow[uu,<-,crossing over,"q_*" near end]&& H_{n}(\tilde{\partial B};\ZZ_k)\,.
\end{tikzcd}
\end{equation}
We continue with the long exact sequence of the pairs $(\partial B,k\delta B)$ and $(\tilde{\partial B},k\tilde{\partial B})$ for the front square of \eqref{njor}, and we obtain the middle square in the following commutative diagram 
\begin{equation}\label{eq23pepe}\xymatrix{
 H_{n}(M;\ZZ_k) \ar[d]^{q_*}\ar[r]^{i_*}    & H_{n}(\partial B;\ZZ_k)  \ar[d]^{q_*} \ar[r]^{j_*} & H_{n}(\partial B, k \delta B;\ZZ_k)\ar[d]^{q_*} \ar[r]^{\cong}_{exc}& \ar[d]^{q*}_{\cong}  H_n(M, \partial M; \ZZ_k)\\
H_{n}(\tilde{M};\ZZ_k) \ar[r]^{i_*}   & H_{n}(\tilde{\partial B};\ZZ_k) \ar[r]^{j*} & H_{n}(\tilde{\partial B}, \tilde{k \delta B};\ZZ_k) \ar[r]^{\cong}_{exc} & H_n(\tilde{M}, \tilde{\partial M}; \ZZ_k)\,.
}\end{equation}
In the previous commutative diagram, we use excision for the third square on the right. Notice that the composition of the horizontal maps in \eqref{eq23pepe} are the maps 
$i_*:H_n(M;\ZZ_k)\longrightarrow H_n(M,\partial M;\ZZ_k)$ and $i_*:H_n(\tilde{M};\ZZ_k)\longrightarrow H_n(\tilde{M},\partial\tilde{M};\ZZ_k)$.

We chase the class $[\mathcal{B},\partial \mathcal{B}]_{\ZZ_k}\in H_{n+1}(\tilde{B},\tilde{M};\ZZ_k)$ in the diagrams \eqref{njor} and \eqref{eq23pepe}, where we obtain the following consequences 
\begin{align*}
    i_*\partial ([\mathcal{B},\partial \mathcal{B}]_{\ZZ_k})&= \partial i_*([\mathcal{B},\partial \mathcal{B}]_{\ZZ_k})\\
                                         &= \partial q_*([B,\partial B])\\
                                         &= q_*\partial([B,\partial B])=q_*([\partial B])\,.
\end{align*}
By Lemma \ref{gato}, we have the equation $j_*([\partial B])=[M,\partial M]$. Thus we obtain the property \eqref{cosa}
and the result follows.
\end{proof}

\begin{proposicion}\label{pancha}
The natural transformation $\Phi:\Omega_*(X;\ZZ_k)\longrightarrow H_*(X;\ZZ_k)$ is well defined.
\end{proposicion}
\begin{proof}
For an $n$-dimensional singular $\ZZ_k$-manifold $(\mathcal{M},f)$ which is null $\ZZ_k$-bordant, there exists an $(n+1)$-dimensional $\ZZ_k$-bordism $(\mathcal{B},F)$ with $\partial \mathcal{B}=\mathcal{M}$ where $F$ extends $f$. We have the commutative diagram:

\begin{equation}\label{eq23p}\xymatrix{
   [\mathcal{B}, \partial\mathcal{B}]_{\ZZ_k}  \in H_{n}(\tilde{B}, \tilde{M};\ZZ_k) \ar@<4ex>[d]^{\partial}    \ar[r] & \ar[d]^{\partial} H_{n}(X,X;\ZZ_k) = 0 \\
 [\mathcal{M}]_{\ZZ_k} \in  H_{n}(\tilde{M};\ZZ_k)  \ar[r] & H_{n}(X;\ZZ_k)\,.
}\end{equation}
This ends the proposition.
\end{proof}

In the case of stratifolds, the fundamental classes are defined by Tene \cite{tene2}. More precisely, 
let $S$ be a compact oriented regular $p$-stratifold of dimension $n$ and denote by $(M,\partial M)$ the smooth manifold we attach as top stratum. We have isomorphisms 
\begin{equation}
\xymatrix{H_n(M,\partial M)\ar[r]^\cong_{exc}& H_n(S,S_{n-2})&H_n(S)\ar[l]_(0.38){\cong}\,,}
\end{equation}
where $S_{n-2}$ is the $(n-2)$-skeleton of $S$.
The {\bf fundamental class} $[S]\in H_n(S)$ is defined as the image of $[M,\partial M]\in H_n(M,\partial M)$.

Let $(T,\partial T)$ be a compact oriented regular $p$-stratifold of dimension $n+1$ with boundary and denote by $(B,\partial B)$ the smooth manifold with boundary and collar attached as the top stratum. 
\begin{equation}
\xymatrix{H_{n+1}(B,\partial B)\ar[r]^(0.4){\cong}_(0.4){exc}& H_{n+1}(T,T_{n-1}\cup \partial T)&H_{n+1}(T,\partial T)\ar[l]_(0.38){\cong}\,,}
\end{equation}
where $T_{n-1}$ is the $(n-1)$-skeleton of $T$. The {\bf relative fundamental class} $[T,\partial T]\in H_{n+1}(T,\partial T)$ is defined as the image of $[B,\partial B]\in H_{n+1}(B,\partial B)$.
\begin{proposicion}(\cite[Lemma~3.9]{tene2})
Let $T$ be a compact oriented regular stratifold of dimension $(n+1)$, where the boundary is $\partial T$. 
Then, the image of $[T,\partial T]$ under the map $\partial: H_{n+1}(T,\partial T)\longrightarrow H_n(\partial T)$ is the class $[\partial T]$.

\end{proposicion}

Assume $\mathcal{S}=(S,\delta S,\theta_i)$ is a closed $n$-dimensional $\ZZ_k$-stratifold where both $S$ and $\delta S$ are compact oriented regular $p$-stratifolds. Similarly as in diagram \eqref{eqod1}, we can find {\bf the fundamental class} $[\mathcal{S}]_{\ZZ_k}\in H_{n}(\tilde{S};\ZZ_k)$ using the following commutative diagram:
\begin{equation}\label{eqo123}\xymatrix{
 \cdots \ar[r] & H_{n}(\partial S;\ZZ_k) \ar[r]\ar[d]^{q_*}    & H_{n}(S;\ZZ_k) \ar[d]^{q_*}\ar[r]^{i_*}    & H_{n}(S,\partial S;\ZZ_k) \ar[d]^{q_*}\ar[r]^{\partial}    & H_{n-1}(\partial S;\ZZ_k) \ar[d]^{q_*}\ar[r]  & \cdots \\
\cdots \ar[r] & H_{n}(\tilde{\partial S};\ZZ_k) \ar[r]    & H_{n}(\tilde{S};\ZZ_k) \ar[r]^{i_*}   & H_{n}(\tilde{S}, \tilde{\partial S};\ZZ_k) \ar[r]^{\partial} & H_{n-1}(\tilde{\partial S};\ZZ_k) \ar[r]        & \cdots
}\end{equation}
In the previous diagram, the rows are the long exact sequences associated with the pairs $(S,\partial S)$ and $(\tilde{S},\tilde{\partial S})$. The quotient map induces the vertical morphisms.  
Again, we have the isomorphism $q_*:H_n(S,\partial S;\ZZ_k)\longrightarrow H_n(\tilde{S},\tilde{\partial S};\ZZ_k)$ and $H_n(\tilde{\partial S};\ZZ_k)\cong H_n(\delta S;\ZZ_k)=0$
The same arguments as those for $\ZZ_k$-manifolds, show that there exists a unique fundamental class $[\mathcal{S}]_{\ZZ_k}\in H_n(\tilde{S};\ZZ_k)$ with the property  \begin{equation}i_*([\mathcal{S}]_{\ZZ_k})=q_*([S,\partial S])\,.\end{equation}

The local orientations at each point define the fundamental class of a manifold. This property also follows for stratifolds considering points inside the interior of the top stratum. Therefore, we use this fact to generalize Lemma \ref{gato} for stratifolds. More precisely, let $S$ be a compact oriented regular $p$-stratifold of dimension $n$, which is the gluing $S=S'\sqcup_{\partial S'=\partial S''}S''$, then in the next diagram, we have that the  fundamental classes are mapped to the generators associated with the point $x$,  
\begin{equation}
\xymatrix{
[S'',\partial S'']\in
H_n(S'',\partial S'')\ar[r]^(0.5){\cong} & H_n(S'',(S'')_{n-2}\cup \partial S'')\ar[r]  & H_n(S'',S''-\{x\})\ar[dd]^\cong \\
\hspace{1.5cm}H_n(S,S')\ar@<-4ex>[u]^\cong_{exc} &&\\
[S]\in H_n(S)\ar@<-4ex>[u]\ar[r]^\cong & H_n(S,S_{n-2})\ar[r] & H_n(S,S-\{x\}) \,.}
\end{equation}
Here $(S'')_{n-2}$ and $S_{n-2}$ are the $(n-2)$-skeletons of $S''$ and $S$.

Similarly, we show the existence of a {\bf relative fundamental class} of an $(n+1)$-dimensional $\ZZ_k$-stratifold with boundary 
$\mathcal{T}=(T,\partial T,\psi_i)$. The $\ZZ_k$-boundary is $\partial \mathcal{T}=(S,\delta S,\theta_i)$ and all stra\-tifolds are compact oriented regular $p$-stratifolds. We can find the fundamental class $[\mathcal{T},\partial\mathcal{T}]_{\ZZ_k}$ using the following commutative diagram:
\begin{equation}\label{eq12}\xymatrix{
 \cdots \ar[r] & H_{n+1}(\partial T,S;\ZZ_k) \ar[r]\ar[d]^{q_*}    & H_{n+1}(T,S;\ZZ_k) \ar[d]^{q_*}\ar[r]^{i_*}    & H_{n+1}(T,\partial T;\ZZ_k) \ar[d]^{q_*}\ar[r]^{\partial}    & H_{n}(\partial T,S;\ZZ_k) \ar[d]^{q_*}\ar[r]  & \cdots \\
\cdots \ar[r] & H_{n+1}(\tilde{\partial T},\tilde{S};\ZZ_k) \ar[r]    & H_{n+1}(\tilde{T},\tilde{S};\ZZ_k) \ar[r]^{i_*}   & H_{n+1}(\tilde{T}, \tilde{\partial T};\ZZ_k) \ar[r]^{\partial} & H_{n}(\tilde{\partial T},\tilde{S};\ZZ_k) \ar[r]        & \cdots
}\end{equation}
where the rows are the long exact sequences associated with the triples $(T,\partial T, S)$ and $(\tilde{T},\tilde{\partial T}, \tilde{S})$, respectively, and the vertical morphisms are induced by considering the quotient spaces. 
The same arguments shows the existence of the fundamental class $[\mathcal{T},\partial\mathcal{T}]_{\ZZ_k}\in H_{n+1}(\tilde{T},\tilde{S};\ZZ_k)$ with the property  \begin{equation}i_*([\mathcal{T},\partial\mathcal{T}]_{\ZZ_k})=q_*([T,\partial T])\,.\end{equation}
The same arguments as those for $\ZZ_k$-manifolds, show that the image of $[\mathcal{T,\partial\mathcal{T}}]_{\ZZ_k}$ under the map $\partial: H_{n+1}(\tilde{T},\tilde{S};\ZZ_k)\longrightarrow H_n(\tilde{S};\ZZ_k)$ is the class $[\partial \mathcal{T}]_{\ZZ_k}$.

As a consequence, the following result is straightforward.
\begin{proposicion}There is a well defined natural transformation $\Phi':SH_*(X;\ZZ_k)\longrightarrow H_*(X;\ZZ_k)$ which fits into the commutative diagram 
\begin{equation} 
    \xymatrix{ \Omega_*(X;\ZZ_k) \ar[r]^{\Phi}\ar[d] & H_*(X;\ZZ_k) \,.\\ SH_*(X;\ZZ_k) \ar[ru]_{\Phi'} &   }
\end{equation}
In addition, $\Phi'$ is an isomorphism for all $CW$ complexes.
\end{proposicion}

\section{A geometric description of the Atiyah--Hirzebruch spectral sequence for $\ZZ_k$-coefficients}
\label{secc1}
We assume all spaces are $CW$ complexes, and for a $CW$ complex $X$, we denote by $X^k$ its $k^{th}$ skeleton. For a  generalized homology theory $h$, a Postnikov tower is a sequence of homology theories $h^{(r)}$ and natural transformations
\begin{equation}
\xymatrix{
&h\ar[d]\ar[rrd]\ar[rrrd]\ar[rrrrd]&&&&\\
\cdots\ar[r]&h^{(r)}\ar[r]&\cdots\ar[r]&h^{(2)}\ar[r]&h^{(1)}\ar[r]&h^{(0)}\,,}
\end{equation}
such that we have the properties:
\begin{itemize}
    \item $h_n(*)\rightarrow h_n^{(r)}(*)$ is an isomorphism for $n\leq r$, and
    \item $h_n^{(r)}(*)$ is trivial for $n>r$.
\end{itemize}
These properties determine $h^{(r)}$ completely, see \cite[Chapter~II, 4.13-4.18]{rudyak}.

Every generalized homology theory $h$, has an associated Atiyah--Hirzebruch spectral sequence  $(E^r_{s,t},d^r_{s,t})$. For $r\geq 2$, Tene \cite{tene} constructs a natural isomorphism of spectral sequences 
$$\xymatrix{E_{s,t}^{r}=\frac{\operatorname{Im}\left(h_{s+t}(X^s,X^{s-r})\longrightarrow h_{s+t}(X^s,X^{s-1})\right)}{\operatorname{Im}\left(h_{s+t+1}(X^{s+r-1},X^s)\longrightarrow h_{s+t}(X^s,X^{s-1})\right)}\ar[rr]&& {\hat{E}}_{s,t}^r=\operatorname{Im}\left(h_{s+t}^{(t+r-2)}(X^s)\rightarrow h_{s+t}^{(t)}(X^{s+r-1})\right)}\,.$$
We now explain with diagram \eqref{eqrefg} the argument of Tene \cite[Sec.~4]{tene}, that gives the isomorphisms
$$E_{s,t}^r=\frac{\operatorname{Im}(f')}{\operatorname{Im}(f)}\cong \operatorname{Im}(f_1)\cong \operatorname{Im}(f_2)\cong \operatorname{Im}(f_3)={\hat{E}}_{s,t}^r\,.$$
\begin{equation}\label{eqrefg}
\xymatrix{ & & h_{s+t}^{(t+r-2)}(X^s)\ar[r]^{f_3}\ar@{->>}[d]&h_{s+t}^{(t)}(X^{s+r-1})\ar[dd]^{\cong}\\ & &h_{s+t}^{(t+r-2)}(X^s,X^{s-r})\ar[rd]& \\ & h_{s+t}(X^s,X^{s-r})\ar[r]\ar[d]^{f'}\ar[dr]^{f_1}\ar@{->>}[ru]\ar@/^{7mm}/[rr]^{f_2}& h_{s+t}(X^{s+r-1},X^{s-r})\ar[r]\ar[d]&h_{s+t}^{(t)}(X^{s+r-1},X^{s-r})\ar@{>->}[d]\\
h_{s+t+1}(X^{s+r-1},X^s)\ar[r]_f\ar[ru] &h_{s+t}(X^s,X^{s-1})\ar[r] &h_{s+t}(X^{s+r-1},X^{s-1})\ar[r]_{\cong} & h_{s+t}^{(t)}(X^{s+r-1},X^{s-1})\,.}
\end{equation}
The differential $\hat{d}_{s,t}^r:{\hat{E}}_{s,t}^r\rightarrow {\hat{E}}_{s-r,t+r-1}^{r}$ is the homomorphism induced by the following diagram: 
\begin{equation}
\xymatrix{&h_{s+t}^{(t+r-2)}(X^s)\ar[r]\ar[d]^{\Phi}&h_{s+t}^{(t)}(X^{s+r-1})\ar[d]^{\Phi}\\&h_{s+t-1}(X^{s-r+1})\ar[r]\ar[d]^{\Psi}&h_{s+t-1}(X^{s-1})\ar[d]^{\Psi}\\h_{s+t-1}^{(t+2r-3)}(X^{s-r})\ar[r]&h_{s+t-1}^{(t+2r-3)}(X^{s-r+1})\ar[r]&h_{s+t-1}^{(t+r-1)}(X^{s-1})\,,}
\end{equation}
where the natural transformation $\Phi$ is defined by the composition 
$$h_n^{(r)}(X)\rightarrow h_n^{(r)}(X,X^{n-r-1})\stackrel{\cong}{\rightarrow} h_n(X,X^{n-r-1})\rightarrow h_{n-1}(X^{n-r-1})\,,$$
and $\Psi$ is the natural transformation given by the composition of the natural transformations in the Postnikov tower.

For oriented bordism $\Omega_*$, Tene \cite{tene} has a geometric description of the Atiyah--Hirzebruch spectral sequence, coming from  a geometric description of  Postnikov tower $SH^{(r)}$. This description of the spectral sequence is similar in spirit to the Conner-Floyd spectral sequence appearing in equivariant bordism \cite{CF} and the spectral sequence for orbifold corbordism of \cite{Angel-cobordism}.  
The bordism theory $SH^{(r)}$ is defined using oriented p-stratifolds, with all strata of codimension $0<k<r+2$ empty. 
Thus a singular stratifold $S$ in $X$, of the form $f: S \rightarrow X$, gives an element of $SH^{(r)}_n(X)$ if $S$ is an $n$-dimensional stratifold with singular part of dimension at most $(n-r-2)$. We put a similar restriction to the stratifold bordisms, which are $(n+1)$-dimensional stratifolds with boundary, and the singular part is of dimension at most $(n-r-1)$. 

Therefore, we have natural transformations $\Omega_n\rightarrow SH_n^{(r)}$, such that $\Omega_n(*)\rightarrow SH_n^{(r)}(*)$ are isomorphisms for $n\leq r$, and $SH_n^{(r)}(*)$ is trivial for $n>r$. Among other properties, we obtain that $SH_n^{(r)}(X^k)$ is trivial for $k+r< n$. 

For $r\geq 2$, denote \ben {\hat{E}}^{r}_{s,t}=\operatorname{Im}( SH^{(t+r-2)}_{s+t}( X^s ) \longrightarrow 
 SH^{(t)}_{s+t}( X^{s+r-1} ))\,,\een and the differential ${\hat{d}}^r_{s,t}:{\hat{E}}^r_{s,t}\longrightarrow {\hat{E}}^r_{s-r,t+r-1}$ is the homomorphism induced by the following diagram:
 \ben
\xymatrix{&SH_{s+t}^{(t+r-2)}(X^s)\ar[r]\ar[d]_\Phi&SH_{s+t}^{(t)}(X^{s+r-1})\ar[d]_\Phi\\
&\Omega_{s+t-1}(X^{s-r+1})\ar[d]_\Psi\ar[r]
&\Omega_{s+t-1}(X^{s-1})\ar[d]_\Psi\\
SH_{s+t-1}^{(t+2r-3)}(X^{s-r})\ar[r]&SH_{s+t-1}^{(t+2r-3)}(X^{s-r+1})\ar[r]&SH_{s+t-1}^{(t+r-1)}(X^{s-1})
\,,}
\een 
where $\Phi$ is a natural transformation defined by 
\ben
SH_n^{(r)}(X)\rightarrow SH_n^{(r)}(X,X^{n-r-1})\stackrel{\cong}{\rightarrow}\Omega_n(X,X^{n-r-1})\rightarrow\Omega_{n-1}(X^{n-r-1})\,.
\een
The isomorphism $SH_n^{(r)}(X,X^{n-r-1})\stackrel{\cong}{\rightarrow}\Omega_n(X,X^{n-r-1})$ is the restriction to the top stratum and the map $\Omega_n(X,X^{n-r-1})\rightarrow\Omega_{n-1}(X^{n-r-1})$ is the boundary homomorphism. 
The natural transformation $\Psi$ is the composition of the natural transformations in the Postnikov tower.  
Therefore, for a
stratifold $S$ of dimension $s+t$, with a map $f:S\rightarrow X^s$, the image of the differential $d^r_{s,t}$ is induced by
\ben \label{wef}[f:S\rightarrow X^s]\longmapsto [f|_{\op{sing}(S)}\circ g:\partial W\rightarrow X^{s-1}]\,,\een
where $W$ is the top stratum of $S$ and $g:\partial W\rightarrow\op{sing}(S)$ is the attaching map used to glue $W$ to the singular part $\op{sing}(S)$.

The $\ZZ_k$-bordism groups $\Omega_n(X ;\ZZ_k)$ form a generalized homology theory (this follows by Section \ref{sec8} or see \cite[Chapter~III]{AGA}). The authors define bordism theory for resolutions with abelian groups in that book. The standard resolution for $\ZZ_k$ and the theory of this section coincide with that given by the definition of $\ZZ_k$-manifolds. We construct a Postnikov tower $SH^{(r)}(\cdot;\ZZ_k)$ defined  with oriented $\ZZ_k$-stratifolds, with all strata of codimension $0<k<r+2$ empty. Thus a singular $\ZZ_k$-stratifold in $X$, of the form $f:(S,\delta S)\longrightarrow X$, represents an element of $SH^{(r)}_n(X)$ if it satisfies:
\begin{itemize}
    \item $S$ is an $n$-dimensional $\ZZ_k$-stratifold with singular part of dimension at most $(n-r-2)$; and
    \item $\delta S$ is an $(n-1)$-dimensional $\ZZ_k$-stratifold with singular part of dimension at most $(n-r-3)$.
\end{itemize}
Similarly, the stratifold bordisms $(T,\delta T)$ should satisfy:
\begin{itemize}
    \item $T$ is an $(n+1)$-dimensional $\ZZ_k$-stratifold with boundary, the singular part is of dimension at most $(n-r-1)$; and 
    \item $\delta T$ is an $n$-dimensional $\ZZ_k$-stratifold with boundary, and the singular part is of dimension at most $(n-r-2)$.
\end{itemize}
Notice that we obtain $SH^{(0)}(\cdot ;\ZZ_k) = SH(\cdot ;\ZZ_k)$.
In what follows, we use the important property that $\Omega_*(*)$ has no odd torsion and just $2$-torsion, see \cite{milnor}.

\bteo\label{teto}
For $k$ an odd number, the homology theories $SH^{(r)}_*(\cdot ;\ZZ_k)$ give the Postnikov tower of the generalized homology theory $\Omega_*(\cdot ;\ZZ_k)$.
\eteo

\bdem
We have natural transformations 
\begin{equation}
\xymatrix{
&\Omega_*(\cdot ;\ZZ_k)\ar[d]\ar[rrd]\ar[rrrd]\ar[rrrrd]&&&&\\
\cdots\ar[r]&SH_*^{(r)}(\cdot ;\ZZ_k)\ar[r]&\cdots\ar[r]&SH_*^{(2)}(\cdot ;\ZZ_k)\ar[r]&SH_*^{(1)}(\cdot ;\ZZ_k)\ar[r]&SH_*^{(0)}(\cdot ;\ZZ_k)\,.}
\end{equation}
The conditions of the Postnikov tower are proven as follows:
\begin{itemize}
    \item  assume $n\leq r$, hence $(n-r-2) \leq -2$ and $(n-r-1) \leq -1$. Thus the $\ZZ_k$-stratifolds are $\ZZ_k$-manifolds and the $\ZZ_k$-stratifolds bordism are $\ZZ_k$-manifolds with boundary. Therefore, the maps 
    $\Omega_n(*,\ZZ_k) \to SH^{(r)}_n(*,\ZZ_k)$ are isomorphisms for $n\leq r$.
    
    \item assume $n>r +1$, hence $n-r-1 \geq 1$ and $n-r-2\geq 0$. Thus for an $n$-dimensional $\ZZ_k$-stratifold $(S,\delta S)$ in $SH^{(r)}_n(*;\ZZ_k)$, we construct the cone as in Definition \ref{elcono}. As a consequence, $SH^{(r)}_n(*;\ZZ_k) = 0$ for $n>r+1$. 
    
    \item assume $n= r+1$, hence $n-r-2=-1$ and $n-r-3=-2$. Thus an $n$-dimensional $\ZZ_k$-stratifold in $SH^{(r)}_n(*;\ZZ_k)$ is a $\ZZ_k$-manifold $(M,\delta M)$. Because $n-r-1=0$ and $n-r-2=-1$,
    we allow $\ZZ_k$-stratifold bordisms with singular points of dimension at most $0$ and the Bockstein has to be an $n$-dimensional manifold with boundary. In $\Omega_{n-1}(*)$ we have $k[\delta M]=0$, but since {\bf $\Omega_*$ has no odd torsion}, then there exists an $n$-dimensional manifold with boundary $N$ where $\partial N=\delta M$. Consider the $\ZZ_k$-stratifold bordism $(C(kN\sqcup_{\partial M}M),N)$ where $C(kN\sqcup_{\partial M}M)$ is the closed cone. The $\ZZ_k$-boundary is precisely the $\ZZ_k$-manifold $(M,\delta M)$ which shows that $SH^{(r)}_n(*;\ZZ_k) = 0$ for $n=r+1$.
\end{itemize}
For $k=2$, this argument fails, and we cannot work around it using  the cone of $\delta M$, because we obtain singular points of dimension $\geq 1$.
\edem

The same arguments of Tene \cite{tene} give a geometric description of the Atiyah--Hirzebruch spectral sequence for $\mathbb{Z}_k$-bordism. For $r\geq 2$ and $X$ a $CW$ complex, define 
\ben {\hat{E}}^{r}_{s,t}=\operatorname{Im}( SH^{(t+r-2)}_{s+t}( X^s ;\ZZ_k) \longrightarrow 
 SH^{(t)}_{s+t}( X^{s+r-1};\ZZ_k ))\,,\een 
 and the differential ${\hat{d}}^r_{s,t}:{\hat{E}}^r_{s,t}\longrightarrow {\hat{E}}^r_{s-r,t+r-1}$ 
is the homomorphism induced by the following diagram:
 \ben
\xymatrix{&SH_{s+t}^{(t+r-2)}(X^s;\ZZ_k)\ar[r]\ar[d]_\Phi&SH_{s+t}^{(t)}(X^{s+r-1};\ZZ_k)\ar[d]_\Phi\\
&\Omega_{s+t-1}(X^{s-r+1};\ZZ_k)\ar[d]_\Psi\ar[r]
&\Omega_{s+t-1}(X^{s-1};\ZZ_k)\ar[d]_\Psi\\
SH_{s+t-1}^{(t+2r-3)}(X^{s-r};\ZZ_k)\ar[r]&SH_{s+t-1}^{(t+2r-3)}(X^{s-r+1};\ZZ_k)\ar[r]&SH_{s+t-1}^{(t+r-1)}(X^{s-1};\ZZ_k)
\,,}
\een 
Therefore, for a singular $\ZZ_k$-stratifold $((S,\delta S), f:S\rightarrow X^s)$, we consider the top stratum which is a $\ZZ_k$-manifold with boundary $(W,\delta W)$. Denote the $\ZZ_k$-boundary by $(M,\delta M):=\partial (W,\delta W)$ and $g:M\rightarrow \op{sing}(S)$ the attaching map used to glue $W$ to the singular part which is of dimension at most $s-r$. The image of the differential $d^r_{s,t}$ is induced by
\begin{equation}\label{diferential}
    [(S,\delta S), f:S\rightarrow X^s]\longmapsto [(M,\delta M),f|_{\op{sing}(S)}\circ g:M\rightarrow X^{s-r}]\,.
\end{equation}
We have finally proved:
\bteo
For $k$ an odd number, the filtration of the Atiyah--Hirzebruch spectral sequence of $\ZZ_k$-bordism
\begin{equation}  E_{n,0}^{\infty} \subseteq \dots \subseteq E^{r+2}_{n,0} \subseteq \dots   \subseteq E_{n,0}^{2} \cong H_n(X;\integer_k)\,, \end{equation}
coincides with 
\begin{equation}
    E_{n,0}^{r}=\op{Im}\left( SH_n^{(r-2)}(X;\ZZ_k)\rightarrow SH_n^{(0)}(X;\ZZ_k)\cong H_n(X;\ZZ_k)\right)\,,
\end{equation}
i.e., the set of classes generated by singular $\ZZ_k$-stratifolds in $X$ with singular part of dimension at most $n-r-2$.
\eteo
Notice that the Atiyah--Hirzebruch spectral sequence is trivial for $k=2$, hence the last theorem does not apply.

\section{Geometric representatives of non-representable classes}
\label{secGR}
The present section is motivated by the calculations of Koshikawa \cite{koshi} and the geometric constructions of the counterexamples of Thom found by the authors in \cite{AST}.

The Steenrod problem \cite{Eil} states the following: if $z\in H_n(X)$ is an integral homology class, does there exist an oriented manifold $M$ and a map $f : M \rightarrow X$ such that $z$ is the image of the generator of $H_n(M)$?

Conner--Floyd \cite{CF} rephrased the Steenrod realization problem in terms of the Atiyah--Hirzebruch spectral sequence $(E_{s,t}^r,d^r_{s,t})$. More precisely, the homomorphism from oriented bordism to integral homology $\Omega_*(X)\rightarrow H_*(X)$ is an epimorphism if and only if the differentials 
$d^r_{s,t}:E_{s,t}^r\longrightarrow E_{s-r,t+r-1}^r$ are trivial for all $r\geq 2$.

Using the previous section, the Steenrod realization problem for $\ZZ_k$-coefficients  
has the following form.

\begin{teorema}
If $X$ is a $CW$-complex and $k$ and odd number, then for the Atiyah--Hirzebruch spectral sequence $(E_{s,t}^r,d^r_{s,t})$, the differentials $d_{s,t}^r:E_{s,t}\rightarrow E_{s-r,t+r-1}$ are trivial for all $r\geq 2$ if and only if the map $\mu:\Omega_n(X;\ZZ_k)\longrightarrow H_n(X;\ZZ_k)$ is an epimorphism for all $n\geq 0$. 
\end{teorema}

For the rest of this section, we assume that $k$ is an odd prime number $p$. Following Conner--Floyd \cite{CF}, we identify stratifolds with maps to $B\ZZ_p$ with stratifolds with free actions of $\ZZ_p$.

The Bockstein exact sequence of $B\ZZ_p$ implies the isomorphisms 
\begin{equation}
H_{2n-1}(B\ZZ_p)\stackrel{\op{mod}p}{\cong}H_{2n-1}(B\ZZ_p;\ZZ_p)\textrm{ and }H_{2n}(B\ZZ_p;\ZZ_p)\stackrel{\beta}{\cong}H_{2n-1}(B\ZZ_p)
\end{equation}
for $n>0$. Take generators $\alpha_i\in H_i(B\ZZ_p;\ZZ_p)$, such that $\beta(\alpha_i)=\alpha_{i-1}$ for $i$ even, and $\beta(\alpha_i)=0$ for $i$ odd. 
The generator $\alpha_{2i}$ is determined by the identity $\beta(\alpha_{2i})=\alpha_{2i-1}$. From Conner--Floyd \cite[p.~144]{CF}, we know that the following equation holds in bordism of $B\ZZ_p$: 
    \ben\label{cofo}p\alpha_{2i-1}+[M^4]\alpha_{2i-5}+[M^8]\alpha_{2i-9}+\cdots=0\,, \een
for $i\geq2$. The manifolds $M^{4k}$, $k=1,2,\cdots$ are constructed inductively in \cite{CF}. 
Therefore, there is a compact oriented manifold $V^{2i}$, with a free action of $\mathbb{Z}_p$, such that 
\ben
\partial V^{2i} = pS^{2i-1}\cup (M^4\times  S^{2i-5} )\cup (M^8\times S^{2i-9} )\cup \cdots 
\een
Denote by $C(M^{4k})$ the cone of $M^{4k}$ and take the gluing of $V^{2i}$ with $C(M^4)\times S^{2i-5} \cup C(M^8)\times S^{2i-9} \cup \cdots$. The boundary of this construction is $pS^{2i-1}$ and therefore the Bockstein is $\alpha_{2i-1}$. We illustrate this construction in Figure \ref{CV}.

\begin{figure}
    \centering
    \includegraphics[scale=1.4]{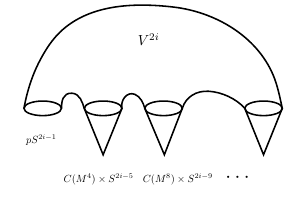}
    \caption{The class $\alpha_{2i}$}
    \label{CV}
\end{figure}

\begin{proposicion}
For $X=B\ZZ_p$, the differentials $d^r_{s,t}$ are trivial for $2\leq r\leq 4$. In particular, $d^5_{2i,0}:E^5_{2i,0}\rightarrow E^5_{2i-5,4}$ has the form 
\begin{equation}
d^5_{2i,0}:H_{2i}(X;\Omega_0(*;\mathbb{Z}_p))\longrightarrow H_{2i-5}(X;\Omega_4(*;\mathbb{Z}_p))\,.
\end{equation}
\end{proposicion}
\begin{proof}
As a consequence of the identities $\Omega_1(*;\mathbb{Z}_p)=0$, $\Omega_2(*;\mathbb{Z}_p)=0$ and $\Omega_3(*;\mathbb{Z}_p)=0$, we obtain that the Atiyah--Hirzebruch spectral sequence for $\mathbb{Z}_p$-bordism, satisfies $E^2\cong\cdots \cong E^5$, consequently, the first not trivial differential could be $d^5_{s,t}$.
\end{proof}

Previously, we constructed $\alpha_{2i}$ as the formal sum $V^{2i}+C(M^4)\alpha_{2i-5} +C(M^8)\alpha_{2i-9} + \cdots$ where the Bockstein is $\alpha_{2i-1}$. This is a $\ZZ_p$-stratifold where 
the singularities are $S^{2i-5} \cup S^{2i-9} \cup \cdots $ coming from the coning. The top stratum is the $\ZZ_p$-manifold \begin{equation}\left(V^{2i} \cup  (M^{4} \times [0,1])\times S^{2i-5}  \cup (M^{8} \times [0,1]) \times S^{2i-9} \cup \cdots, S^{2i-1}\right) \end{equation} 
with $\ZZ_p$-boundary given by $(M^{4} \times S^{2i-5}  \cup M^{8} \times S^{2i-9} \cup \cdots,\emptyset)$. The attaching map of the top stratum is given by the map 
\begin{equation}
V^{2i} \cup  (M^{4} \times [0,1])\times S^{2i-5}  \cup (M^{8} \times [0,1]) \times S^{2i-9} \cup \cdots    \longrightarrow  S^{2i-5} \times \{*\} \cup S^{2i-9} \times \{*\} \cup \cdots
\end{equation}
that collapses each $S^{2i-4k-1} \times M^{4k} \times \{0\}$ to $S^{2i-4k-1} \times \{*\}$ , the other parts of the cylinders are attached to $V$ (see figure \ref{CV}). 

\begin{teorema}
For $X=B\ZZ_p$, the image of the class $\alpha_{2i}\in H_{2i}(X;\ZZ_p)$, $i\geq 3$, under the differential $d^5:H_{2i}(X;\ZZ_p)\longrightarrow H_{2i-5}(X;\ZZ_p)$ is non trivial.
\end{teorema}
\begin{proof}
From Section \ref{secc1}, we see that to calculate $d^5(V^{2i}+C(M^4)\alpha_{2i-5} +C(M^8)\alpha_{2i-9} + \cdots)$ we need to understand the singularities and the attaching maps to calculate $d^5=\Psi \circ \Phi$ as a map
\ben
\xymatrix{
\operatorname{Im}\left( SH^{(3)}_{2i}\left( X^{2i};\ZZ_p \right) \rightarrow SH^{(0)}_{2i}\left( X^{2i+4} ;\ZZ_p\right)\right)
\ar[d]_{d^5}\\
\operatorname{Im}\left( SH^{(7)}_{2i-1}\left( X^{2i-5} ;\ZZ_k\right) \rightarrow 
 SH^{(4)}_{2i-1}\left( X^{2i-1} ;\ZZ_k\right)
\right)\,.}
\een

By definition, $\Phi$ is restricting the classifying map associated with the principal bundle of the action, to the boundary of the manifold attached as top stratum. Therefore
$$\Phi: \operatorname{Im}\left( SH^{(3)}_{2i}\left( X^{2i};\ZZ_p \right) \rightarrow 
 SH^{(0)}_{2i}\left( X^{2i+4} ;\ZZ_p\right)
\right) \longrightarrow \operatorname{Im}\left( \Omega_{2i-1}\left(X^{2i-4};\mathbb{Z}_p\right) \rightarrow \Omega_{2i-1}\left(X^{2i-1};\mathbb{Z}_p\right)  \right)$$
is the restriction of the classifying map associated to $M^{4}\times S^{2i-5}  \cup  M^{8}\times S^{2i-9} \cup \cdots $.
We write this as:
$$
\Phi(V^{2i}+C(M^4)\alpha_{2i-5} +C(M^8)\alpha_{2i-9} + \cdots) = M^{4}\times \alpha_{2i-5}  +   M^{8}\times\alpha_{2i-9} + \cdots 
$$
Now when we apply the natural transformation
$$\Psi: \operatorname{Im}\left( \Omega_{2i-1}\left(X^{2i-4};\mathbb{Z}_p\right) \rightarrow \Omega_{2i-1}\left(X^{2i-1};\mathbb{Z}_p\right)\right) \longrightarrow \operatorname{Im}\left( SH^{(7)}_{2i-1}\left( X^{2i-5};\ZZ_p \right) \rightarrow 
 SH^{(4)}_{2i-1}\left( X^{2i-1} ;\ZZ_p\right)
\right),
$$
in $SH^{(4)}_{2i-1}\left( X^{2i-1} ;\ZZ_p\right )$, we have 
$$
 M^{4}\alpha_{2i-5}  +   M^{8}\alpha_{2i-9} + \cdots  =   M^{4}\alpha_{2i-5}\,.$$
Because in $SH^{(4)}_{2i-1}\left( X^{2i-1} ;\ZZ_p\right )$ bordisms with singularities up to dimension $(2i-1)-4-1$ are allowed and for $k > 1$, $M^{4k}\alpha_{2i-4k-1} $ is the boundary of $C(M^{4k})\alpha_{2i-4k-1} $ which is a bordism with singular stratum of dimension $2i-4k-1 \leq (2i-1)-4-1$. Therefore 
$$
d^5(V+C(M^4)\alpha_{2i-5} +C(M^8)\alpha_{2i-9} + \cdots) =   M^4\alpha_{2i-5}\,,
$$
which is a generator of $H_{2i-5}(X;\Omega_4(*;\mathbb{Z}_p))$, since
$M^4$ can be taken to be $\mathbb{CP}^2$ which generates  $\Omega_4(*;\mathbb{Z}_p)$.
Consequently, the image of $\alpha_{2i}$ under the differential $d^5$ is a nontrivial element.
\end{proof}

\section{{$\ZZ_2$-}stratifold homology is stratifold homology with {$\ZZ_2$-}coefficients}
\label{secfin}
Kreck \cite[Ch.~4]{kreck} introduces the theory of $\mathbb{Z}_2$-oriented stratifolds in order to represent homology with $\ZZ_2$-coefficients.
He calls this theory {\bf stratifold homology with {$\ZZ_2$-}coefficients} which is denoted by $\mathcal{SH}_*(X;\mathbb{Z}_2)$. 
The elements are bordism classes of singular stratifolds where the stratum of codimension $1$ is empty, but there is no requirement of an orientation of the top stratum. 
There is a natural isomorphism 
\begin{equation}
    \mathcal{SH}_*(X;\mathbb{Z}_2)\longrightarrow H_*(X;\mathbb{Z}_2)\,,
\end{equation}
that for a singular stratifold $(S,f:S\rightarrow X)$, takes the pushforward of the fundamental class $[S]\in H_*(S;\mathbb{Z}_2)$. 

This article introduces the theory of $\ZZ_2$-stratifolds, which also represent homology with $\ZZ_2$-coefficients. This is called {\bf $\ZZ_2$-stratifold homology} which is denoted by $SH_*(X;\ZZ_2)$. 
The elements are $\ZZ_2$-bordism classes of singular $\ZZ_2$-stratifolds where the stratum of codimension $1$ is empty,
but we require an orientation of the top stratum. There is a natural isomorphism 
\begin{equation}
    SH_*(X;\mathbb{Z}_2)\longrightarrow H_*(X;\mathbb{Z}_2)\,,
\end{equation}
that for a singular $\ZZ_2$-stratifold $((S,\delta S),f:S\rightarrow X)$,  takes the pushforward of the fundamental class $[S]_{\ZZ_2}\in H_n(\tilde{S};\ZZ_2)$.

Therefore, we have the following commutative diagram
\begin{equation}
\xymatrix{
SH_*(X;\mathbb{Z}_2) \ar[rr]^q \ar[dr]_{\cong} & & \mathcal{SH}_*(X;\mathbb{Z}_2) \,.\ar[dl]^{\cong} \\
& H_*(X;\mathbb{Z}_2) & }
\end{equation}
To define the map $q$, note that for an $n$-dimensional $\ZZ_2$-stratifold $(S,\delta S,\theta_i)$, the quotient space $\tilde{S}$ is an $n$-dimensional $\ZZ_2$-oriented stratifold. This is true because the two disjoint collars associated with the two embedded copies of the Bockstein $\delta S$ are combined to produce a bicollar on the quotient space $\tilde{S}$. For $(\mathcal{S},f)$ an $n$-dimensional singular $\ZZ_2$-stratifold, with $\mathcal{S}=(S,\delta S,\theta_i)$, we have the map $q:SH_n(X;\ZZ_2)\longrightarrow\mathcal{SH}_n(X;\ZZ_2)$ defined by $q([\mathcal{S},f])=[\tilde{S},\tilde{f}]$ where $\tilde{f}$ is the quotient map.

The description of the inverse for the isomorphism $q:SH_*(X;\mathbb{Z}_2) \longrightarrow\mathcal{SH}_*(X;\mathbb{Z}_2)$,
is an open question. Wall \cite{wall} shows a description for an $n$-dimensional manifold, whose first Stiefel-Whitney class $\omega_1\in H^1(M;\ZZ_2)$ is the restriction $\op{mod} 2$ of a class with integer coefficients. Thus there is a map $f:M\rightarrow K(\ZZ,1)=S^1$, which can be approximated by a smooth map. Take a regular value $t$ and consider the cutting $f^{-1}(t)$. The manifold with boundary $M-f^{-1}(t)$ is orientable, and in that case $f^{-1}(t)$ is also orientable, this describes $q^{-1}$ for this particular case.

\section{Appendix}

\label{apendice}

\subsection{Regular values for $\ZZ_k$-stratifolds}
\label{sec5}
In \cite[p.~27]{kreck}, Kreck defines a {\bf regular value} for a smooth map $f:S\rightarrow N$ from a closed stratifold $S$ to a boundaryless manifold $N$ as a point $x\in N$ such that for all $y\in f^{-1}(x)$ the differential $df_y$ is surjective, or, equivalently,  $x$ is a regular value of $f|_{S^i}$ for all $i$. Kreck \cite[Prop.~2.6 and Prop.~2.7, p.~27-29]{kreck} shows that the set of regular values of $f$ is dense in $N$, and $f^{-1}(x)$ is a stratifold of dimension $\op{dim}S-\op{dim} N$.

In \cite[p.~35]{kreck}, Kreck defines a {\bf smooth map} from a stratifold with boundary $T$ to a boundaryless manifold $N$, $f:T\rightarrow N$, as a continuous function whose restriction to $\stackrel{\circ}{T}=T-\partial T$ and to $\partial T$ is smooth and which commutes with the collar
$c:\partial T\times [0,\epsilon )\rightarrow U$, i.e., there is a $\delta >0$ with $\delta\leq\epsilon$ such that $fc(x,t)=f(x)$ for all $x\in \partial T$ and $t<\delta$.
Kreck \cite[p.~38]{kreck} says $x\in N$ is a {\bf regular value} if $x$ is a regular value for $f|_{T-\partial T}$ and $f|_{\partial T}$.
In this case, the preimage $f^{-1}(x)$ is a stratifold with boundary of dimension $\op{dim}T-\op{dim} N$. 
This fact is a generalization of a result of \cite[Prop.~2.7]{kreck} using local retractions for $T-\partial T$ and $\partial T$, together with Theorem \ref{polo}. Also, by Theorem \ref{polo}, the set of regular values is dense in $N$.

\begin{teorema}\cite[p.~60-62]{pollack}\label{polo}
Let $f:M\rightarrow N$ be a smooth map of a manifold $M$ with boundary onto a boundaryless manifold $N$ and let $x\in N$ a regular value of both $f$ and $\partial f$. Then the preimage $f^{-1}(x)$ is a submanifold of $M$ with boundary $f^{-1}(x)\cap \partial M$ of dimension $\op{dim}M-\op{dim}N$. Moreover, the set of critical values of both $f$ and $\partial f$ has measure zero.
\end{teorema}

In what follows, we obtain the version for stratifolds with boundary, of Kreck's Proposition 4.2 and Proposition 4.3 in \cite{kreck}.

\begin{proposicion}\label{regular}
Let $T$ be an oriented, regular stratifold with boundary, $f:T\rightarrow \RR$ a smooth function, and $t$ a regular value. Then $f^{-1}(t)$ is an oriented, regular stratifold with boundary. 
\end{proposicion}
\begin{proof}
We use the work of Kreck \cite[Prop.~4.2, p.~44]{kreck} in order to show that ${f|_{T-\partial T}}^{-1}(t)$ and ${f|_{\partial T}}^{-1}(t)$ are regular stratifolds. 
We induce the collar by restriction. We notice $f^{-1}(t)$ is  an oriented stratifold, since $T^{n-1}=\emptyset$ and the intersection with the top stratum is an oriented manifold.
\end{proof}

\begin{observacion}
In the case $T$ is a $p$-stratifold with boundary, see Remark \ref{pstratifold}; hence the preimage $f^{-1}(t)$ is also a $p$-stratifold with boundary, for $t$ a regular value. 
The construction of this $p$-stratifold is as follows: for $t$ a regular value, on each stratum $T_i$, , the preimage $f|_{T_i}^{-1}(t)$ is a submanifold of $T_i$ with boundary $f|_{T_i}^{-1}\cap\partial T_i$ by Theorem \ref{polo}. Similarly, the preimage $\partial f|_{\partial T_i}^{-1}(t)$ is a submanifold of $\partial T_i$. Moreover, these submanifolds come with collars and  attaching maps, that construct this $p$-stratifold with boundary inductively.
\end{observacion}

\begin{proposicion}\label{piropo}
Let $T$ be a regular stratifold with boundary. Then the set of regular points of a smooth map $f:T\rightarrow \RR$ 
is an open subset of $T$. If, in addition, $T$ is compact, the regular values form an open set.
\end{proposicion}
\begin{proof}
We know the regular points of $f|_{T - \partial T}$ and $f|_{\partial T}$ are open in $T-\partial T$ and $\partial T$, respectively. By definition $fc(x,t)=f(x)$ for some collar $c$ in $T$. So the regular points of $f|_{\partial T}$ extend to the collar by an open set. Thus we obtain the first statement.     
Now, in the case $T$ is compact, the singular points that are the complement of the regular points, form a closed set which is compact. Thus the image under $f$ is closed, implying that the regular values are form an open set.
\end{proof}

A crucial fact for the Mayer--Vietoris sequence for stratifolds is the following:

\begin{proposicion}(\cite[Prop.~2.8]{kreck})\label{pipi1} Let $S$ be a closed $n$-dimensional, connected stratifold and $A$ and $B$ disjoint closed non-empty subsets of $S$. Then there is a non-empty $(n-1)$-dimensional stratifold $P$ with $P\subset S-(A\cup B)$. That is, $P$ separates $A$ and $B$.
\end{proposicion}

\begin{observacion}\label{klklk}
More precisely, Kreck \cite[Prop.~2.4, p.~26]{kreck} constructs a smooth function $f:S\rightarrow\RR$ which maps $A$ to $1$ and $B$ to $-1$. 
The stratifold $P$ is the preimage $f^{-1}(t)$ of a regular value $t\in(-1,1)$ such that $f^{-1}(t)\subset S-(A\cup B)$ and $A\subset f^{-1}(t,\infty)$ and $B\subset f^{-1}(-\infty, t)$. After composition with an appropriate translation, we can assume $t=0$.
\end{observacion}

We extend Proposition \ref{pipi1} to the theory of $\ZZ_k$-stratifolds. However, it is not enough to consider stratifolds with boundary. The reason is that the smooth function must be $\ZZ_k$-invariant on the boundary. 
One needs a smooth function that factors as follows: 
$$\xymatrix{S \ar[rr]^f\ar[rd]_{pr}&&\RR\,.\\&\tilde{S}\ar[ru]_{\tilde{f}}&}$$

We need a  $\ZZ_k$-stratifold version of the following result.
\begin{proposicion}(\cite[Prop.~2.4]{kreck})\label{shishi}
Let $A\subset S$ be a closed subset of a stratifold $S$, $U$ an open neighborhood of $A$, and $f:U\rightarrow \RR$ a smooth function. Then there is a smooth function $g:S\rightarrow \RR$ such that $g|_A=f|_A$.
\end{proposicion}

\begin{proposicion}
Let $\mathcal{S} = (S,\delta S ,\theta_i)$ be an $n$-dimensional compact closed $\ZZ_k$-stratifold, $A \subseteq \tilde{S}$ a closed subset of the quotient space, $U$ an open neighborhood of $A$ and $f: U \to \mathbb{R}$ a smooth function. Then there exists a smooth function $G:S \to \RR$ that factors through the quotient space $\tilde{S}$ such that $G|_{A} = f|_{A}$ in the quotient space. 
\end{proposicion}

\begin{proof}
We construct a smooth function on $S$, which is the gluing of the following two functions:
\begin{itemize}
\item For the first function, consider $\delta S$ inside the quotient space $\tilde{S}$. By normality of $S$, there exists a closed subset $A_1\subset\delta S$ such that $A\cap \delta S\subset \operatorname{int} A_1$ and $A_1\subset \delta S\cap U$. 
By compactness and using the collar, $pr:\delta S\times [0,\epsilon)\rightarrow \tilde{S}$, we find $0<t<\epsilon$ such that 
$$pr^{-1}(A)\cap (\partial S\times [0,2t))  \subset pr^{-1}(A_1) \times [0,2t)\subset pr^{-1}(U)\,.$$
Proposition \ref{shishi} implies that it is possible to construct a smooth function $f_1:\delta S\rightarrow\RR$ that maps $A_1$ to $1$
and $f_1(x)=0$ for $x\in \delta S-U\cap\delta S$. Lift $f_1$ to a smooth function on the whole boundary $\partial S$ and take the smooth function $g_1:\partial S\times [0,2t)\rightarrow\RR$ by denoting $g_1(x,s)=f_1(x)$; 
\item For the second function takes the stratifold $S_1:=S-(\partial S\times [0,t])$ and again by Proposition \ref{shishi}, we can construct a smooth function $g_2: S_1\rightarrow\RR$ which maps $A\cap S_1$ to $1$ and $g_2(x)=0$ for $x\in S_1-U\cap S_1$.
\end{itemize}
A partition of unity glues these two functions together into a smooth function $G:S\longrightarrow \RR$, which is $\ZZ_k$-invariant. 
Thus it descends to the quotient and sends $A$ to $1$ and $\tilde{S}-U$ to 0. 
Using Kreck's Proposition 2.4 (Proposition \ref{shishi}), we apply the previous process to construct the function
 $G:S\longrightarrow \RR$, which is $\ZZ_k$-invariant and $G|_{A} = f|_{A}$ in the quotient space. 
\end{proof}

In conclusion, we obtain the $\ZZ_k$-stratifold version of Kreck's Proposition 2.8 in \cite{kreck} (Proposition \ref{pipi1}).

\begin{proposicion}\label{pipi2} Let $(S,\delta S)$ be a $n$-dimensional, compact, connected $\ZZ_k$-stratifold and $A$ and $B$ disjoint closed non-empty subsets of the quotient space $\tilde{S}$. Then there is a non-empty $(n-1)$-dimensional $\ZZ_k$-stratifold $(P,\delta P)$ with $\tilde{P}\subset \tilde{S}-(A\cup B)$ and $\delta P\subset \delta S-\left((A\cup B)\cap\delta S\right)$. 
\end{proposicion}

We construct a smooth function $G:S\rightarrow \RR$, that factors through the quotient space $\tilde{S}$, and maps $A$ to $1$ and $B$ to $-1$. The $\ZZ_k$-stratifold $(P,\delta P)$ is provided by a regular value $t\in(-1,1)$ of both $S$ and $\partial S$, and we have $P=G^{-1}(t)$ and $\delta P=G|_{\delta S}^{-1}(t)$.
The pair $(P,\delta P)$ is a $\ZZ_k$-stratifold because we choose a regular value by Proposition \ref{piropo} and the preimage
$P=G^{-1}(t)$ is a stratifold with boundary, where $\partial P=G^{-1}(t)\cap \partial S=\bigsqcup_{i\in\ZZ_k} \theta_i(G^{-1}(t)\cap \delta S)=\bigsqcup_{i\in\ZZ_k} \theta_i(G|_{\delta S}^{-1}(t))$ and the Bockstein is $\delta P=G|_{\delta S}^{-1}(t)$.

\subsection{Mayer--Vietoris sequence}\label{MV}
Let $U$ and $V$ be open subsets of a space $X$.
The purpose of this section is to show that the following long exact sequence is exact
\begin{equation}\label{prinseq}
\xymatrix{ \cdots \ar[r]^(.3)d & SH_n(U\cap V;\ZZ_k) \ar[r]^(.4){i_*} & SH_n(U;\ZZ_k) \oplus  SH_n(V;\ZZ_k)\ar[r]^(.6){j_*} & SH_n(U\cup V;\ZZ_k)
                \ar@{->} `r/8pt[d] `/10pt[l] `^dl[lll]|{\,d\,} `^r/12pt[dll] [dll] \\
             & SH_{n-1}(U\cap V;\ZZ_k)\ar[r]^(.4){i_*} & \cdots\hspace{3cm} & 
               }
\end{equation}
for $n\geq 1$. Denote by $i_{U}$ and $i_{V}$ the inclusions $U\cap V\hookrightarrow U$ and $U\cap V\hookrightarrow V$. 
Denote by $j_{U}$ and $j_{V}$ the inclusions $U\hookrightarrow U\cup V$ and $V\hookrightarrow U\cup V$. 
We describe the homomorphisms as follows:
\begin{itemize}
    \item $i_*:SH_n(U\cap V;\integer_k)\To SH_n(U;\integer_k)\oplus SH_n(V;\integer_k)$ is given by $({i_U}_*, {i_V}_*)$,
    \item $j_*:SH_n(U;\integer_k) \oplus SH_n(V;\integer_k) \To SH_n(U \cup V;\integer_k)$ is given by ${j_U}_* - {j_V}_*$; and 
    \item the connecting homomorphism  $d:SH_n(U\cup V;\ZZ_k)\longrightarrow SH_{n-1}(U\cap V;\ZZ_k)$, considers an element $[(S,\delta S),g]\in SH_n(U\cup V;\ZZ_k)$. For the projection $pr:S\rightarrow\tilde{S}$, we obtain 
    disjoint closed subsets of $\tilde{S}$ given by $A:=pr(g^{-1}(X-V))$ and $B:=pr(g^{-1}(X-U))$. By Proposition \ref{pipi2}, we obtain an $(n-1)$-dimensional $\ZZ_k$-stratifold $(P,\delta P)$ such that 
$\tilde{P}\subset \tilde{S}-(A\cup B)$ and $\delta P\subset \delta S-\left((A\cup B)\cap\delta S\right)$. We define 
\begin{equation}
    d([(S,\delta S),g])=[(P,\delta P),g|_{P}]\,.
\end{equation}
In the case $A$ or $B$ is empty, the $\ZZ_k$-stratifold $(P,\delta P)$ is empty, and the differential is zero.
\end{itemize}

\begin{proof}[Proof that $d$ is well-defined]
It was pointed out by Kreck \cite[p.~304]{kreck1} that in the case of bordism of smooth manifolds, the connecting homomorphism for the Mayer--Vietoris sequence is well-defined because  of the existence of a {\bf bicollar} for $P:=G^{-1}(0)$, i.e., an isomorphism with $P\times (-\epsilon,\epsilon)$, where $0$ is a regular value by the composition of a translation. For a stratifold $S$, this is only possible up to bordism where we naively change $S$ by $S-P\cup (P\times (-\epsilon,\epsilon))$. The formal statement is Lemma B.1 in \cite[p. 197]{kreck}, and the proof is as follows. Kreck's Proposition 4.3 in \cite{kreck} (Proposition \ref{piropo} for our case) allows us to choose $\delta>0$ such that $(-\delta,\delta)$ consists only of regular values of $G$. Consider a monotone smooth map $\mu: \RR \to \RR$ which is the identity for $|t| > \delta /2$ and $0$ for $|t| < \delta/4$. Take $\eta:S\times \RR\longrightarrow \RR$ mapping $(x,t)\mapsto G(x)-\mu(t)$ which has $0$ as regular value.
Kreck's Proposition 4.2 in \cite{kreck}, implies that $S'=\eta^{-1}(0)$ is a regular stratifold containing $P\times (-\delta/4,\delta/4)$ which is the required bicollar. It remains to construct a bordism between $S$ and $S'$.
Now, take the function $\gamma:S\times \RR\times [0,1]\longrightarrow \RR$ defined by $$(x,t,s)\mapsto G(x)-(\zeta(s)\mu(t)+(1-\zeta(s))t)\,,$$
where $\zeta:[0,1]\rightarrow \RR$ is 0 near 0 and 1 near 1. This map has 0 as a regular value, and the preimage $Q:=\gamma^{-1}(0)$ is the bordism between $S$ and $S'$.

For the case of $\ZZ_k$-stratifolds, we start with a closed $\ZZ_k$-stratifold $(S,\delta S,\theta_i)$ and we need to separate this $\ZZ_k$-stratifold by a bicollar over the regular $\ZZ_k$-stratifold 
$$(P,\delta P,\theta_i|_{\delta P})=(G^{-1}(0),G|_{\delta S}^{-1}(0),\theta_i|_{G|_{\delta S}^{-1}(0)})\,.$$ 
In such a case,  
the bicollar consists of a pair of embedded cylinders $(G^{-1}(0)\times (-\epsilon,\epsilon),G|_{\delta S}^{-1}(0)\times (-\epsilon,\epsilon))$ such that they are consistent with respect to the embeddings $\theta_i$'s. In order to reproduce Kreck's Lemma B.1 in \cite{kreck}, in the context of $\ZZ_k$-stratifolds, we observe that the map $\eta:S\times \RR\rightarrow\RR$
is $\ZZ_k$-invariant for our case and we take $(S',\delta S'):=\left(\eta^{-1}(0),\eta|_{\delta S\times \RR}^{-1}(0)\right)$
which is a regular $\ZZ_k$-stratifold by Proposition \ref{regular}. 
We construct the bicollar taking $\epsilon=\delta /4$ with $\delta$ as in the previous paragraph. The $\ZZ_k$-bordism between $((S,\delta S),\operatorname{id})$ and $((S',\delta S'),\pi_1)$, where $\pi_1$ is the projection on the first variable, is constructed similarly as in the case of stratifolds. 

The remaining steps to show that $d$ is well defined are analogous to the case of stratifolds \cite[p.~199-200]{kreck}.
The idea is to assume that $[(S,\delta S),g]$ is trivial, then $[(S',\delta S'),g\circ \pi_1]$ is also trivial.
For the modified $\ZZ_k$-stratifold $(S',\delta S')$, we can take the separating function given by the projection on the second variable. This means that there exists a $\ZZ_k$-bordism $(T,\delta T)$ that has as $\ZZ_k$-boundary $(S',\delta S')$. Moreover,  the separating function extends to $T$. This function has a regular value $t$ very close to $0$, then $(P\times \{t\},\delta P\times \{t\})$ is null $\ZZ_k$-bordant taking the preimage of $t$. However, this last $\ZZ_k$-stratifold is $\ZZ_k$-bordant to $(P,\delta P)$. This finishes the proof.
\end{proof}

The following results are required to show that \eqref{prinseq} is exact.

\bprop\label{l1}
Suppose $M$ is a manifold with boundary of dimension $n$ and $g:M\rightarrow \RR$ a smooth map with regular value $0$. 
Then the preimage $g^{-1}(-\infty,0]$ is a manifold with boundary, and the boundary has the form 
 $$g^{-1}(0) \sqcup_{(g^{-1}(0)\cap \partial M)} (g^{-1}(-\infty,0]\cap \partial M)\,.$$ 
In addition, if $M$ is oriented, then $g^{-1}(-\infty, 0]$ is oriented.
\eprop
\begin{proof}
Here we will dismiss the orientation of the manifolds, which is understood depending on the case. 
From \cite[p.~62]{pollack}, we have that for a manifold $N$ without boundary and $f:N\rightarrow\RR$ a smooth map, the preimage $f^{-1}(-\infty,0]$ is a manifold with boundary given by $f^{-1}(0)$. 
Thus the restriction to the boundary $g|_{\partial M}$ satisfies that $g|_{\partial M}^{-1}(-\infty,0]=g^{-1}(-\infty,0]\cap\partial M$ is a manifold with boundary $g|_{\partial M}^{-1}(0)=g^{-1}(0)\cap\partial M$. 
Furthermore, we use Theorem \ref{polo} (or \cite[p.~60-62]{pollack}) which shows that $g^{-1}(0)$ is also a manifold with boundary $g^{-1}(0)\cap\partial M$. 
Then we glue these two manifolds obtaining a boundaryless smooth manifold of dimension $n-1$. In Figure \ref{figuero}, we give an illustration of the boundary of $g^{-1}(-\infty,0]$.
\begin{figure}
    \centering
    \includegraphics[scale=1.6]{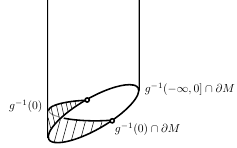}
    \caption{The boundary of $g^{-1}(-\infty,0]$.}
    \label{figuero}
\end{figure}
Now, we consider the restriction $g|_{M-\partial M}$ and we obtain a smooth structure for $g|_{M-\partial M}^{-1}(-\infty,0]=g^{-1}(-\infty,0]-(g^{-1}(-\infty,0]\cap \partial M)$ with boundary $g^{-1}(0)-(g^{-1}(0)\cap\partial M)$.
We can establish a collar around $g^{-1}(0)$. Since $g$ commutes with the collar of $\partial M$, there is a collar around $(g^{-1}(-\infty,0]\cap \partial M)$. 
Finally, similar as in the proof of Proposition \ref{equivalence}, we combine the two collars of $g^{-1}(0)$ and $(g^{-1}(-\infty,0]\cap \partial M)$ where we smooth the corners by straightening the angle \cite[p.~9-10]{CF} (or see Section \ref{sec4}).
Thus the proposition follows.
\end{proof}

Proposition \ref{l1} follows for stratifolds with boundary (all $p$-stratifolds). Notice $g^{-1}(-\infty,0]\cap \partial T=(g^{-1}(-\infty,0]\cap S)\cup (g^{-1}(-\infty,0]\cap k\delta T)$ and hence $g^{-1}(-\infty,0]$ is a stratifold with boundary where 
$$\partial g^{-1}(-\infty,0]=g^{-1}(0)\cup (g^{-1}(-\infty,0]\cap S)\cup (g^{-1}(-\infty,0]\cap k\delta T)\,.$$
Thus we obtain the following application for $\ZZ_k$-stratifolds.

\bcor\label{regular3}
Suppose $(T,\delta T)$ is a $\ZZ_k$-stratifold with boundary of dimension $n$ where the $\ZZ_k$-boundary is denoted by $(S,\delta S)$. Let $g:T\to \mathbb{R}$ be a smooth map which factors to the quotient space $\tilde{T}$
with $0$ as a regular value for $g$.
Then, the preimage $\left(g^{-1}(-\infty,0],g^{-1}(-\infty,0]\cap \delta T\right)$ is a $\ZZ_k$-stratifold with $\ZZ_k$-boundary, the following  $\ZZ_k$-stratifold 
$$\left(
g^{-1}(0)\cup (g^{-1}(-\infty,0]\cap S),
(g^{-1}(0)\cap\delta T)\cup
(g^{-1}(-\infty,0])\cap \delta S)\right)\,.$$
\ecor

Now, we use these tools to show the exactness of the Mayer--Vietoris sequence.

\begin{proof}[Proof of the exactness of \eqref{prinseq}]
We will follow the arguments used for the case of stratifolds \cite[p.~200-208]{kreck}, where we will specify the additional details used for the case of $\ZZ_k$-stratifolds.

To show that we have a complex, we notice that both ${j_U}\circ i_{U}$ and ${j_V}\circ i_{V}$ are the canonical inclusion $U\cap V\hookrightarrow U\cup V$, therefore $j_*\circ i_*=0$. 
We show the other cases $i_*\circ d=0$ and $d\circ j_*=0$ in what follows:
for the first identity, we choose a representative for the homology class (with $\ZZ_k$-coefficients) in $U\cap V$ such that we can cut along the separating $\ZZ_k$-stratifold defining the boundary operator. The two pieces separated by this $\ZZ_k$-stratifold induce the null $\ZZ_k$-bordisms on the homology groups (with $\ZZ_k$-coefficients) associated with $U$ and $V$.
For the second identity, if $[(S,\delta S),g]\in SH(U;\ZZ_k)$, we can choose a smooth function and the regular value such that the separating regular $\ZZ_k$-stratifold is empty, therefore, $d({j_U}_*)=0$. By the same argument $d({j_V}_*)=0$.

Now, we show  exactness:
\begin{itemize}
    \item $\op{ker}j_*\subset \op{im} i_*$. Consider $[(S,\delta S),f]\in SH_n(U;\ZZ_k)$ and $[(S',\delta S'),f']\in SH_n(V;\ZZ_k)$ with ${j_U}_*([(S,\delta S),f])={j_V}_*([(S',\delta S'), f'])$. There exists a $\ZZ_k$-bordism $((T,\delta T),F)$ between $[(S,\delta S),j_Uf]$ and $[(S',\delta S'),j_Vf']$ where $F=\tilde{F}\circ pr$ for the quotient $\tilde{F}:\tilde{T}\rightarrow U\cup V$. For the closed disjoint subsets $A_T=\tilde{S}\cup \tilde{F}^{-1}(X-V)$ and $B_T=\tilde{S'}\cup \tilde{F}^{-1}(X-U)$, we construct a separating function $G:T\rightarrow \RR$ which is $\ZZ_k$-invariant with $G(A_T)=1$ and $G(B_T)=-1$ and $-1<s<1$
 a regular value (we can assume $s=0$) such that $(G^{-1}(0),G^{-1}(0)\cap \delta T)$ is a separating $\ZZ_k$-stratifold. We can find a bicollar around $G^{-1}(0)$ similarly as when we show that $d$ is well defined. Therefore, Corollary \ref{regular3} implies that $((S,\delta S),f)$ and $((G^{-1}(0),G^{-1}(0)\cap \delta T),F|_{G^{-1}(0)})$ are $\ZZ_k$-bordant in $U$ by the $\ZZ_k$-bordism $((G^{-1}[0,\infty),G^{-1}[0,\infty)\cap\delta T),F|_{G^{-1}[0,\infty)})$, and $((G^{-1}(0),G^{-1}(0)\cap \delta T),F|_{G^{-1}(0)})$ and $((S',\delta S'),f')$ are $\ZZ_k$-bordant in $V$ by the $\ZZ_k$-bordism
$((G^{-1}(-\infty,0],G^{-1}(-\infty,0]\cap\delta T),F|_{G^{-1}(-\infty,0]})$. Thus ${i_U}_*([(G^{-1}(0),G^{-1}(0)\cap \delta T),F|_{G^{-1}(0)}])=[(S,\delta S),f]$ and 
${i_V}_*([(G^{-1}(0),G^{-1}(0)\cap \delta T),F|_{G^{-1}(0)}])=[(S',\delta S'),f']$.

    \item $\op{ker}i_*\subset \op{im} d$. Consider $[(P,\delta P),r]\in SH_{n-1(U\cap V;\ZZ_k)}$ with the identities ${i_U}_*([(P,\delta P),r])=0$ and ${i_V}_*([(P,\delta P),r])=0$. There exist null $\ZZ_k$-bordisms $((T_1,\delta T_1),R_1)$ and $((T_2,\delta T_2),R_2)$ of ${i_U}_*([(P,\delta P),r])$ and ${i_V}_*([(P,\delta P),r])$, respectively. We construct $((T_1\sqcup_P T_2,\delta T_1\sqcup_{\delta P} \delta T_2), R_1\sqcup_r R_2)$ with image under $d$ equal to $[(P,\delta P),r]$.

    \item $\op{ker}d\subset \op{im}j_*$. Consider $[(S,\delta S),f]\in SH_n(U\cup V;\ZZ_k)$ with $d([(S,\delta S),f])=0$. For a separating function $G$ with regular value $s$ as in the definition of $d$. We denote $(P,\delta P)=(G^{-1}(s),G|_{\delta S}^{-1}(s))$ which has a bicollar. We put $(S_+,\delta S_+)=(G^{-1}[s,\infty),G|_{\delta S}^{-1}[s,\infty))$ and $(S_-,\delta S_-)=(G^{-1}(-\infty,0],G|_{\delta S}^{-1}(-\infty,0])$. Then $S=S_+\sqcup_P S_-$ and $\delta S=\delta S_+\sqcup_{\delta P}\delta S_-$. By the assumptions, there is $((Z,\delta Z),r)$ with $r:Z\rightarrow U\cap V$ which has the $\ZZ_k$-boundary 
    $(P,\delta P)$ and $f|_P=r|_P$. Consider the continuous maps $f_+:S_+\sqcup_P Z\rightarrow U$, $f_-:S_-\sqcup_P Z\rightarrow V$ 
    and the gluing $T:=((S_+\sqcup_P Z)\times [0,1])\sqcup_Z((S_-\sqcup_P Z)\times [1,2])$ (similarly for the Bockstein $\delta T$) gives a $\ZZ_k$-bordism between 
    ${j_U}_*(((S_+\sqcup_P Z,\delta S_+\sqcup_{\delta P} \delta Z),f_+))-{j_V}_*(((S_-\sqcup_P Z,\delta S_-\sqcup_{\delta P} \delta Z),f_-)$ and $((S,\delta S),f)$.
    We show an illustrative picture of the $\ZZ_k$-bordism $(T,\partial T)$ in Figure \ref{dafor}.
    \begin{figure}[h!]
        \centering
        \includegraphics[scale=1.3]{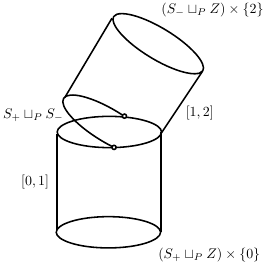}
        \caption{The $\ZZ_k$-bordism $T$.}
        \label{dafor}
    \end{figure}
\end{itemize}

\end{proof}

\newpage


\begin{thebibliography}{20}

\bibitem[An07]{Angel-cobordism}
A. Angel, {\it A spectral sequence for orbifold cobordism}.  Algebraic topology---old
  and new, Banach Center Publ. 85, Polish Acad. Sci. Inst. Math., Warsaw (2007).


\bibitem[AST21]{AST} A. Angel, C. Segovia and A. F. Torres, Thom's counterexamples for the Steenrod problem, arXiv:2105.01806.


\bibitem[Ba74]{baas} N. A. Baas, On bordism theory of manifolds with singularities. {\it Math. Scand.} 33 (1973), 279--302 (1974). 

\bibitem[Br78]{Brum}  G. Brumfiel, 
Realizing homology classes by PL manifolds. Algebraic and geometric topology (Proc. Sympos., Univ. California, Santa Barbara, Calif., 1977), pp. 43–-60, {\it Lecture Notes in Math.}, 664, Springer, Berlin, 1978. 



\bibitem[BRS76]{AGA} S. Buoncristiano, C. P. Rouke and B. J. Sanderson, 
{\it A geometric approach to homology theory}. London Mathematical Society Lecture Note Series, No. 18. Cambridge University Press, Cambridge-New York-Melbourne, 1976. iii+149 pp. 


\bibitem[CF64]{CF} P. Conner and E. Floyd, {\it Differentiable periodic maps}.
Ergebnisse der Mathematik und ihrer Grenzgebiete, (N.F.), Band 33 Academic Press, Inc., Publishers, New York; Springer-Verlag, Berlin-Göttingen-Heidelberg 1964 vii+148 pp. 



\bibitem[Ei49]{Eil} S. Eilenberg, On the problems of topology. {\it Ann. of Math}. (2) 50 (1949), 247–260. 


\bibitem[Fu13]{fuehring} S. F\"uhring, \emph{Positive Scalar Curvature and a Smooth Variation of Baas-Sullivan Theory}. Doctoralthesis, Universit{\"a}t Augsburg, 2013.


\bibitem[GP74]{pollack} V. Guillemin and A. Pollack, 
{\it Differential topology}. Prentice-Hall, Inc., Englewood Cliffs, N.J., 1974. xvi+222 pp. 


\bibitem[Ha02]{hatcher} A. Hatcher, {\it Algebraic topology}. Cambridge University Press, Cambridge, 2002. xii+544 pp.


\bibitem[Ha17]{hatcher2} A. Hatcher, \emph{Vector Bundles \& K-Theory}, personal webpage-Cornell, 2017.


\bibitem[Hi]{hirsch} M. W. Hirsch, \emph{Differential Topology}, Graduate Texts in Mathematics, 33. Springer-Verlag, New York, 1994.


\bibitem[Ko72]{koshi} H. Koshikawa, A remark on the {S}teenrod representation of {B}({Z}p $\times$ {Z}p), {\it J. Fac. Sci. Hokkaido Univ. Ser}., Vol. 22, No. I (1972) 67--73.

\bibitem[Kr10]{kreck} M. Kreck, \emph{Differential algebraic topology. From stratifolds to exotic spheres}. Graduate Studies in Mathematics, 110. American Mathematical Society, Providence, RI, 2010. xii+218 pp. 

\bibitem[Kr18]{kreck1} M. Kreck, A zoo of geometric homology theories, {\it J. of Singul}., 18 (2018), 300--306.

\bibitem[Mi68]{milnor} J. Milnor.  On the cobordism ring $\Omega_{*}$, {\it Notices Amer. Math. Soc.}, \textbf{5} (1968), 457.


\bibitem[MS74]{sullivan} J. W. Morgan and D. P. Sullivan, The transversality charactersitic class and
linking cycles in surgery theory, {\it Annals of Math.}, 99: 463--544, 1974.

\bibitem[No62]{novikov} S. P. Novikov, Homotopy properties of Thom complexes, {\it Mat. Sb. (N.S.)} 57 (99) (1962), 407--442.





\bibitem[Ru98]{rudyak} Y. B. Rudyak, \emph{On Thom Spectra, Orientability, and Cobordism}, Springer Monographs in Mathematics. Springer-Verlag, Berlin, 1998. 





\bibitem[Si71]{sirk} R. Sirkorski, Differential modules. {\it Colloquium Mathematicae} 24.1 (1971): 45--79.

\bibitem[Su66]{Sullivan3} D. P. Sullivan, \emph{Triangulating homotopy equivalences}, Ph.D. Thesis 1965, Princeton
University, Rice Univ. (mimeographed).

\bibitem[Su67]{Sullivan1} D. P. Sullivan, On the Hauptvermutung for manifolds. {\it Bull. Amer. Math. Soc.} 73 (1967), 598--600.

\bibitem[Su71]{Sullivan4} D. P. Sullivan, Singularities in spaces. Proceedings of Liverpool Singularities Symposium, II (1969/1970), pp. 196–206. {\it Lecture Notes in Math.}, Vol. 209, Springer, Berlin, 1971. 

\bibitem[Su96]{Sullivan2} D. P. Sullivan, Triangulating and smoothing homotopy equivalences and homeomorphisms. Geometric Topology Seminar Notes. {\it The Hauptvermutung book}, 69--103, $K$-Monogr. Math., 1, Kluwer Acad. Publ., Dordrecht, 1996. 


\bibitem[Te10]{tene} H. Tene, 
A geometric description of the Atiyah-Hirzebruch spectral sequence for B-bordism. 
{\it M\"unster J. Math.} 10 (2017), no. 1, 171--188. 

\bibitem[TeD10]{tene2} H. Tene,  \emph{Stratifolds and Equivariant Cohomology Theories}, Dissertation, Bonn, 2010.


\bibitem[Th54]{Novfra} R. Thom, Quelque propi\'et\'es  globales des vari\'eti\'es diff\'erentiables, {\it Comment. Math. Helv.} 28 (1954) 17--86.

\bibitem[To19]{torres}  A. Torres, \emph{$\ZZ_k$-Stratifolds}, PhD Thesis, Universidad de los Andes. 2019.

\bibitem[Wa60]{wall} C.T.C. Wall, Determination of the cobordism ring.
{\it Ann. of Math.} (2) 72 (1960), 292--311. 



\end{thebibliography}
\end{document}